\definecolor{darkred}{RGB}{139,0,0}
\definecolor{darkgreen}{RGB}{0,100,0}
\definecolor{darkmagenta}{RGB}{139,0,139}
\definecolor{darkpurple}{RGB}{110,0,180}
\definecolor{darkblue}{RGB}{40,0,200}
\definecolor{darkorange}{RGB}{255,140,0}
\definecolor{darkpink}{RGB}{255,20,147}
\newtheorem{theorem}{Theorem}
\newtheorem{corollary}{Corollary}
\newtheorem{lemma}{Lemma}
\newtheorem{example}{Example}
\newtheorem{proposition}{Proposition}
\newtheorem{remark}{Remark}
\newtheorem{algorithm}{Algorithm}
\newcommand{\rd}{\,\mathrm{d}}
\newcommand{\bsx}{\boldsymbol{x}}
\newcommand{\bsp}{\boldsymbol{p}}
\begin{document}
	
	\title{A Discrepancy Bound for Deterministic Acceptance-Rejection Samplers Beyond $N^{-1/2}$ in Dimension 1}

	\author{Houying Zhu  \and  Josef Dick}

	
	\date{School of Mathematics and Statistics, \\ The University of New South Wales, Australia}

	\maketitle
	
	\begin{abstract}
		In this paper we consider an acceptance-rejection (AR) sampler based on deterministic driver sequences.  We prove that the discrepancy of an $N$ element sample set generated in this way is bounded by $\mathcal{O} (N^{-2/3}\log N)$, provided that the target density is twice continuously differentiable with non-vanishing curvature and the AR sampler uses the  driver sequence  $$\mathcal{K}_M= \{( j \alpha, j \beta ) ~~ mod~~1 \mid j = 1,\ldots,M\}, $$ where $\alpha,\beta$ are real algebraic numbers such that $1,\alpha,\beta$ is a basis of a number field over $\mathbb{Q}$ of degree $3$. For the driver sequence $$\mathcal{F}_k= \{ ({j}/{F_k}, \{{jF_{k-1}}/{F_k}\} ) \mid j=1,\ldots, F_k\},$$ where $F_k$ is the $k$-th Fibonacci number and $\{x\}=x-\lfloor x \rfloor$ is the fractional part of a non-negative real number $x$, we can remove the $\log$  factor to improve the convergence rate to $\mathcal{O}(N^{-2/3})$,  where again $N$ is the number of samples we accepted.
		
		We also  introduce a criterion for measuring the goodness of driver sequences.  The proposed approach is numerically tested  by calculating the star-discrepancy of samples generated for some target densities using $\mathcal{K}_M$ and $\mathcal{F}_k$ as driver sequences. These results confirm that achieving a convergence rate beyond $N^{-1/2}$ is possible in practice using $\mathcal{K}_M$ and $\mathcal{F}_k$ as driver sequences in the acceptance-rejection sampler. 
		
	{\bf{Keywords}}: Acceptance-rejection sampler,  discrepancy,  Fibonacci lattice points, integration error.
	\end{abstract}

	\section{Introduction}
	Many applications involve sampling from a non-uniform distribution where  direct simulation is difficult  or even
	impossible \cite{Devroye1986,HLD2004}. One possible way to obtain samples from such distributions is to choose
	a proposal density from which we can sample  with low cost and then take part of the initial points as samples of the target density under some rules. The implementation of this idea is known as acceptance-rejection technique. We are interested  in the properties of samples generated in this way, especially the discrepancy of those points.
	By a generalization of the definition of the classical discrepancy, the discrepancy with respect to a distribution is defined in the following way. Let  $P_N=\{\bsp_j \mid j=1,\ldots,N\} \subset [0,1]^{s-1}$. Let $\psi:[0,1]^{s-1}\to \mathbb{R}_+$, where $\mathbb{R}_+$ denotes  the set of non-negative real numbers. Then the star-discrepancy of the point set $P_N$ with respect to $\psi$ is given by
	\begin{equation}
	D_{N,\psi}^*(P_N)=\sup_{\boldsymbol{t}\in[0,1]^{s-1}} \left|\displaystyle \frac{1}{N}\sum_{j=1}^{N} 1_{[\boldsymbol{0},\boldsymbol{t})}({\boldsymbol{p}_j})
	-\displaystyle\frac{1}{C}\int_{[\boldsymbol{0},\boldsymbol{t})}\psi({\boldsymbol{z}})d\boldsymbol{z}\right|,
	\end{equation}\label{defStarDis}
	where $$C=\int_{[0,1]^{s-1}}\psi(\boldsymbol{z})d\boldsymbol{z}>0$$ is the normalizing constant, $[\boldsymbol{0},\boldsymbol{t})=\prod_{i=1}^{s-1}[0,t_i)$, and $1_{[\boldsymbol{0},\boldsymbol{t})}$ is the indicator function of $[\boldsymbol{0},\boldsymbol{t})$, i.e., it is $1$ if $\bsp_j \in [\boldsymbol{0},\boldsymbol{t})$ and $0$ otherwise.  The special case where $\psi=1$  is denoted by $D_N^*(P_N)$.
	
	The aim of this paper is to devise methods to generate sample sets with low-discrepancy  with respect to a given unnormalized target distribution. This is in general a difficult problem and not much is known on how low-discrepancy point sets can be generated. The existence of low-discrepancy point sets with respect to an arbitrary non-uniform measure in $[0,1]^s$ has been discussed by Aistleitner and Dick in \cite{AD2014}. They proved that there exists an $N$ point set in $[0,1]^s$ whose star-discrepancy associated with a normalized Borel measure has the order of magnitude ${(\log N)^{\frac{3s+1}{2}}}/{N}$. Since the result in \cite{AD2014} is based on probabilistic arguments, it is not known how to explicitly construct good point sets. In general, it appears to be a very difficult problem to explicitly construct point sets achieving this rate of convergence. Even how to explicitly construct point sets which achieve a convergence rate beyond $N^{-1/2}$ is not known in most cases. In this paper we show that in dimension 
	$1$ it is possible to explicitly construct point sets with discrepancy of order $N^{-2/3}$ for the more general case where the target density is only known up to a constant but satisfies some smoothness conditions.

	The star-discrepancy of samples appears in the upper bound on the integration error in the Koksma-Hlawka inequality \cite{AD2015,BCGT2015}
	\begin{eqnarray*}
		\Big| \frac{1}{N}\sum_{j=1}^{N} f(\bsp_j)-\frac{1}{C}\int_{[0,1]^{s-1}} f(\bsx) \psi(\bsx) \rd \bsx\Big|
		\le V_{HK}(f) D_{N,\psi}^*(P_N),
	\end{eqnarray*} 
	where $V_{HK}(f) $ is the Hardy-Krause variation of the function, see \cite{AD2015,BCGT2015} and Section~1.2 below for details. 	This makes the star-discrepancy an important quality criterion for sample sets.
	
	If the goal is to use the low-discrepancy point sets for numerical integration, then sometimes alternative methods can also be used. For instance,
	\begin{equation*}\centering
	{\frac{1}{C}\int_{[0,1]^{s-1}} f(\bsx)\psi(\bsx) \rd \bsx}
	\end{equation*}
	can be approximated by
	\begin{equation*}\centering
	{\Big(\sum\limits_{j=1}^{N} w_j \psi(\bsp_j)\Big)^{-1}  \sum\limits_{j=1}^{N} w_j f(\bsp_j) \psi(\bsp_j)},
	\end{equation*}
	where we used some quadrature rule with weights $w_j$ and quadrature points $\bsp_j$ for $j = 1, \ldots, N$. See for instance \cite{BG2004,C1997,C2003,DP2010} for more background on suitable quadrature rules.
	
	However, this method may not always yield good results. For instance, if $\int_{[0,1]^{s-1}} \psi(\bsx) \rd \bsx$ is small, then even small errors in approximating $\int_{[0,1]^{s-1}} \psi(\bsx) \rd \bsx$ by $\frac{1}{N} \sum\limits_{j=1}^{N} \psi(\bsp_j)$ may yield large errors overall. In this case our method based on acceptance-rejection provides a good alternative which still yields small integration errors. Further, this method does not yield point sets with low discrepancy with respect to the unnormalized target density, which is the main aim of this paper.

	In this paper we focus on the  acceptance-rejection method which can be used to obtain samples with distribution $\psi$  through a rather simple procedure as indicated below. The acceptance-rejection algorithm based on random inputs works as follows. 
	
	\begin{algorithm}\label{Alg:AR} 
		For a given a target density $\psi: [0,1]^{s-1}\to{\mathbb{R}}_{+}$, choose a proposal density $H: [0,1]^{s-1}\to \mathbb{R}_+$ such that there exists a constant $L<\infty$ with  $\psi(\boldsymbol{x}) < LH(\boldsymbol{x})$ for all $\boldsymbol{x}$ in $[0,1]^{s-1}$. Then the acceptance-rejection algorithm is given by
		\begin{enumerate}
			\item [i)~] Draw $X\thicksim H$ and $u\thicksim U([0,1])$.
			\item [ii)~]  Accept $Y=X$ as a sample of $\psi$ if $u\leq \frac{\psi({X})}{LH({X})}$,
			otherwise go back to the first step.
		\end{enumerate}
	\end{algorithm}

	To improve the performance of this algorithm, there are at least two aspects which  can be studied, namely
	choosing a better proposal density $H$ \cite{Devroye1986,HLD2004} or proposing good initial samples with respect to a well chosen $H$. We focus on the latter point in this paper. Note that we always choose the uniform distribution as our proposal distribution.
	
	The acceptance-rejection algorithm based on a deterministic driver sequence works as follows.
	\begin{algorithm}\label{DAR} For a given a target density $\psi:[0,1]^{s-1}\to{\mathbb{R}}_{+}$, assume that there exists a constant $L<\infty$ such that  $\psi(\boldsymbol{x}) < L$ for all $\boldsymbol{x}\in [0,1]^{s-1}$. Let the uniform distribution be the proposal density. Let $A=\{\boldsymbol{x}\in[0,1]^s \mid \psi(x_1, \ldots, x_{s-1})\geq L x_s\}$.
		\begin{itemize}
			\item [i)~] Generate a uniformly distributed point set $T_M=\{\bsx_j \mid j= 1,\ldots, M \}$ in $[0,1]^s$.
			\item [ii)~]Use the acceptance-rejection method for the points $T_M$ with respect to the density $\psi$, i.e., we accept the point $\boldsymbol{x}_j$ if $\boldsymbol{x}_j \in A$, otherwise reject. Let $\widetilde{Q}_N(T_M; \psi) = A\cap T_M$ be the sample set which we accepted.
			\item [iii)~]  Project the points in $\widetilde{Q}_N(T_M; \psi)$ onto  the first $s-1$ coordinates and denote the resulting point set by $Q_N(T_M; \psi)=\{\boldsymbol{y}_j \mid j = 1, \ldots, N \}\subset [0, 1]^{s-1}$.
			\item [iv)~] Return the point set $Q_N(T_M; \psi)$.
		\end{itemize}
	\end{algorithm}
	
	Via the definition of the acceptance-rejection algorithm, we can view $Q_N$ as a function of point sets $T_M$ and unnormalized target densities $\psi$, which maps to point sets of size $N$ (where $N$ is itself a function of the particular inputs). Note that the function $Q_N$ (and in particular also $N$) also depends on the choice of $L$, since $L$ is not uniquely defined by $\psi$. We mainly suppress the dependence on $L$ in the following.
	
	We restrict our investigation to the case where the density function is defined in $[0,1]$ in this paper  (which can be generalized to any bounded interval), since in this case we can prove bounds on the star-discrepancy beyond  the order $N^{-1/2}$. Generalizing our results to dimension $s>1$ would be of great interest, but this would require an extension of results on the discrepancy with respect to  convex sets with smooth boundary and non-vanishing 	curvature, which is currently not known. See \cite{BCGT2015} for more details.
	
	\subsection{Discrepancy bounds}
	In \cite{ZD2014} we proposed a deterministic acceptance-rejection sampler using low-discrepancy point sets as driver sequences. Therein we proved that the star-discrepancy is of order  $N^{-1/s}$ for density functions  defined in the $(s-1)$-dimensional cube, using  $(t,m,s)$-nets or $(t,s)$-sequences as driver sequences. However, numerical results suggested a much better rate of convergence. Additionally, we proved  a lower bound on the star-discrepancy with respect to a concave density function. The lower bound suggests a convergence rate of the form $N^{-\frac{2}{s+1}}$ for  density functions defined in $[0,1]^{s-1}$.
	We recall the lower bound here.
	
	\begin{proposition}\label{B:lower} Let $T_M$ be an arbitrary  point set  in $[0,1]^s$. Then there exists a concave density function $\psi: [0,1]^{s-1} \to [0, 1]$ such that, for $N$ samples $Q_N(T_M; \psi)$ generated by the acceptance-rejection algorithm with respect to $T_M$ and $\psi$, we have
		\begin{equation*}
		D_{N,\psi}^*(Q_N(T_M; \psi))\ge c_sN^{-\frac{2}{s+1}},
		\end{equation*}
		where $c_s >0$ is independent of $N$ and $T_M$ but depends on $s$.
	\end{proposition}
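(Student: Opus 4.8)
\emph{Proof idea.} The plan is to first rewrite $D^*_{N,\psi}$ as the discrepancy of the driver points $T_M$ \emph{inside} the convex acceptance region $A$, then to choose $\psi$ (hence $A$) depending on $T_M$ so that the part of $\partial A$ that the test sets ``see'' is positively curved, and finally to quote the classical lower bound for irregularities of distribution with respect to convex bodies.

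Writing $L$ for the bound in Algorithm~\ref{DAR}, the set $A=\{\boldsymbol x\in[0,1]^s:\psi(x_1,\dots,x_{s-1})\ge Lx_s\}$ is the hypograph $\{x_s\le\psi(x_1,\dots,x_{s-1})/L\}$ of the concave function $\psi/L$, hence convex. For a box $[\boldsymbol 0,\boldsymbol t)\subseteq[0,1]^{s-1}$, Fubini gives $\tfrac1C\int_{[\boldsymbol 0,\boldsymbol t)}\psi(\boldsymbol z)\,\mathrm d\boldsymbol z=\mathrm{vol}_s\!\big(A\cap([\boldsymbol 0,\boldsymbol t)\times[0,1])\big)\big/\mathrm{vol}_s(A)$, while a point of $Q_N(T_M;\psi)$ lies in $[\boldsymbol 0,\boldsymbol t)$ exactly when the underlying point of $T_M$ lies in $A\cap([\boldsymbol 0,\boldsymbol t)\times[0,1])$, and $N=|T_M\cap A|$. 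Hence $D^*_{N,\psi}(Q_N(T_M;\psi))$ is precisely the discrepancy of the $N$ points $T_M\cap A$, relative to the normalised Lebesgue measure on $A$, with respect to the test sets $A\cap([\boldsymbol 0,\boldsymbol t)\times[0,1])$; differencing two such sets (legitimate since $D^*$ dominates half of any such difference) gives, for $s=2$,
\[
D^*_{N,\psi}(Q_N(T_M;\psi))\ \ge\ \frac1{2N}\,\Big|\,\big|T_M\cap R\big|-\tfrac{N}{\mathrm{vol}_2(A)}\,\mathrm{vol}_2(R)\,\Big|,\qquad R=A\cap\big([a,b)\times[0,1]\big),
\]
for all $0\le a<b\le1$, and analogously for $s>2$ with $[a,b)$ replaced by a subbox of $[0,1]^{s-1}$ (at the cost of an extra $s$-dependent factor from the inclusion--exclusion).

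A \emph{fixed} $\psi$ is useless here: after a volume-preserving change of variables that straightens $A$ to $[0,1]^s$, these test sets become so restricted that $D^*_{N,\psi}$ can be forced down to order $(\log N)^{s-1}/N$ by a suitable $T_M$. The construction must therefore adapt $\psi$ to $T_M$, and by the displayed inequality it is enough to exhibit a concave $\psi:[0,1]^{s-1}\to[0,1]$ and a set $R$ of the above form with $\big|\,|T_M\cap R|-\tfrac{N}{\mathrm{vol}_s(A)}\mathrm{vol}_s(R)\,\big|\gg N^{(s-1)/(s+1)}$, $N=|T_M\cap A|$. I would take the graph of $\psi$ to be, on a small subbox of $[0,1]^{s-1}$, a short cap of a sphere of small radius $\rho$ adapted to $T_M$ (the rest of $\psi$ being pieced together from affine functions so as to keep $\psi$ concave and inside $[0,1]$), so that the sets $R$ available in the displayed bound include regions bounded in part by pieces of spheres of arbitrarily small radius. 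On such a region $T_M\cap A$ cannot be equidistributed beyond the rate forced by the classical theory of irregularities of distribution with respect to convex bodies with smooth, positively curved boundary -- Schmidt's theorem for $s=2$, and its $s$-dimensional analogue (see \cite{BCGT2015} and the references therein) -- which furnishes exactly an $R$ with counting discrepancy $\gg N^{(s-1)/(s+1)}$; dividing by $2N$ yields $D^*_{N,\psi}(Q_N(T_M;\psi))\gg N^{-2/(s+1)}$ for the corresponding $\psi$.

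The main obstacle is making this ``realisation'' step work: the classical theorems produce an extremal convex set in general position, whereas the admissible $R$ must be a corner box times $[0,1]$, intersected with $A$, so that one has only vertical-hyperplane cuts of a single adapted body at one's disposal; one must therefore re-run the underlying cap-counting argument (in the spirit of Schmidt's proof) using only this constrained family of test sets, taking advantage of the fact that the graph of $\psi$ can carry arcs of arbitrarily large curvature over correspondingly short intervals. A secondary point is the normalisation by the number $N$ of \emph{accepted} points: $\psi$ must be chosen so that $|T_M\cap A|$ is of the same order as the $N$ for which the classical estimate is invoked. Finally, in the degenerate situation where $T_M$ piles up near the face $\{x_s=1\}$ so that no $\psi$ can accept more than a vanishing fraction of $T_M$, one argues directly: choosing $\psi$ with $\int_{[\boldsymbol 0,\boldsymbol t)}\psi$ bounded away from $0$ and from $C$ along a gap of the projected point set gives a discrepancy of constant order, which trivially dominates $N^{-2/(s+1)}$.
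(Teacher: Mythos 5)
You should first note that this paper does not actually prove Proposition~\ref{B:lower}: it is recalled from \cite{ZD2014}, so the comparison is with the proof given there, which likewise proceeds by adapting the concave density to the given driver set $T_M$. Your preparatory reduction is correct and matches the mechanics used elsewhere in the paper (cf.\ the proof of Theorem~\ref{Lem:2Dis}): differencing anchored boxes shows that $D^*_{N,\psi}(Q_N(T_M;\psi))$ dominates, up to the factor $\tfrac{1}{2N}$, the self-normalized counting discrepancy of $T_M\cap A$ on slab cuts $R=A\cap([a,b)\times[0,1])$, and your observation that no fixed $\psi$ can work, so that $\psi$ must be chosen adversarially after seeing $T_M$, is the right starting point.

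The genuine gap is at exactly the step you yourself flag and then leave undone: Schmidt's theorem cannot be invoked as a black box, and the ``re-run the cap-counting argument'' step is where the entire content of the proposition lies, with two specific obstructions that your sketch does not resolve. First, the classical argument compares the counts of a fixed point set in two \emph{different} convex bodies (inscribed polytope versus polytope with the empty caps attached) against the fixed normalization ``number of points times Lebesgue measure''; in your setting, once $\psi$ is fixed the only admissible test sets are vertical-slab cuts of the single region $A$, and both $N=|T_M\cap A|$ and $\lambda(A)$ themselves depend on which caps you chose to include, so the comparison body is unavailable and the normalization moves with the construction. Second, because both the empirical measure and the target are normalized, any global surplus created by attaching empty caps cancels (taking $t=1$ gives discrepancy $0$); the imbalance must be localized in a single slab, which requires an explicit pigeonhole/averaging over roughly $N^{(s-1)/(s+1)}$ caps of height of order $N^{-2/(s+1)}$, with a case distinction (many empty caps versus many occupied caps) recast for the self-normalized count $N\lambda(R)/\lambda(A)$, and with a check that $N$ stays of the intended order under the adaptive choices (your ``secondary point''). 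None of this balancing is carried out, and concentrating all the curvature on a subbox whose size shrinks with $N$ would additionally degrade the constants. So your proposal is a reasonable programme, essentially along the same lines as the proof in \cite{ZD2014}, but the decisive construction and counting argument --- the actual proof --- is missing.
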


	It is natural to ask whether the above bound is achievable, i.e., can we construct a driver sequence $T_M$ such that the discrepancy of the point set $Q_N(T_M; \psi)$ achieves a convergence rate of (almost) $N^{-2/3}$ in dimension 1  for a class of target densities $\psi$ (note that dimension $1$ corresponds to $s=2$ in Proposition~\ref{B:lower}).   
	Here we present two types of constructions of driver sequences $T_M$ for which this property holds for the class of twice continuously differentiable target densities with non-vanishing curvature. The first one is shown in Theorem~\ref{Th:2thirds} below. It uses the notions of non-vanishing curvature and number fields of degree $3$, which we explain in the following.
	
	The curvature of a twice continuously differentiable plane curve $\gamma(t) = (x(t), y(t))$ used in this paper is  defined as $$\kappa(t) = \frac{x'(t) y''(t) - y'(t) x''(t)}{( (x'(t) )^2 + (y'(t) )^2)^{3/2}},$$ where the parameterization is such that $(x'(t))^2 + (y'(t))^2 \neq 0$ for all $t$ in the domain. If the curve is given by a function $\gamma(t) = (t, f(t))$, then this reduces to $$\kappa(t) = \frac{f''(t)}{(1 + (f'(t))^2)^{3/2}}.$$ Recall that if $f$ is concave, then $\kappa \le 0$ and if $f$ is convex then $\kappa \ge 0$. By the assumption that the curve is twice continuously differentiable we have that $\kappa$ is continuous. The assumption that the curvature is non-vanishing implies therefore that $|\kappa(t)| \ge c > 0$ for all $t$ in the domain, and by the continuity of $t$, $\kappa$ is either positive or negative everywhere. In particular, if the curve is given by a function, this means that the function is either strictly concave or 
	strictly convex everywhere.
	
	For the construction of a suitable driver sequence we use algebraic number fields over the set of rational numbers $\mathbb{Q}$. An algebraic number field over $\mathbb{Q}$ is a finite degree field extension of the field $\mathbb{Q}$ of rational numbers and its dimension as a vector space over $\mathbb{Q}$ is called the degree of the number field. For instance, the set $\mathbb{Q}(\xi, \xi^2) = \{a + b \xi + c \xi^2 \mid a, b, c \in \mathbb{Q}\}$, where $\xi$ is a real root of a third degree irreducible polynomial over $\mathbb{Z}$, is a (real) number field of degree $3$. In this case, $\{1, \xi, \xi^2\}$ is a basis of the number field.
	
	\begin{theorem}\label{Th:2thirds}
		Let an unnormalized density function $\psi:[0,1]\to \mathbb{R}_+$  be twice continuously differentiable and having non-vanishing curvature.
		Assume that there exists a constant $L<\infty$ such that $\psi(x) \leq L$ for  all $x\in[0,1]$. Let \begin{eqnarray*}
			\mathcal{K}_M= \{ ( j \alpha, j\beta ) ~~ mod~~1 \mid j=1,\ldots,M \},
		\end{eqnarray*}
		where $\alpha,\beta$ are real algebraic numbers such that $1,\alpha,\beta$ is a basis of a number field over  $\mathbb{Q}$ of degree $3$. Then the discrepancy of  $Q_N (\mathcal{K}_M; \psi) \subset [0,1]$ generated by the  acceptance-rejection sampler using $\mathcal{K}_M$ as  driver sequence satisfies
		\begin{eqnarray*}
			D^*_{N,\psi}( Q_N (\mathcal{K}_M; \psi))\le C_{\psi, L}N^{-2/3}\log N,
		\end{eqnarray*}
		where  $C_{\psi, L}$ is a constant depending only on the target density $\psi$ and the constant $L$.
	\end{theorem}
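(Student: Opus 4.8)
The plan is to reduce the $\psi$-star-discrepancy of $Q_N(\mathcal{K}_M;\psi)$ to a counting problem for the driver points $\mathcal{K}_M$ against the planar regions lying under the graph of $g:=\psi/L$. For $t\in[0,1]$ put $A_t:=\{(x,y)\in[0,1]^2: 0\le x<t,\ 0\le y\le g(x)\}$, so that $A=A_1$ and $\operatorname{area}(A)=C/L>0$. By the construction of the sampler, the number of accepted points whose first coordinate is $<t$ equals $\#(\mathcal{K}_M\cap A_t)$, while $\tfrac1C\int_0^t\psi(z)\rd z=\operatorname{area}(A_t)/\operatorname{area}(A)$ and $N=\#(\mathcal{K}_M\cap A)$. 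Writing $\#(\mathcal{K}_M\cap A_t)=M\operatorname{area}(A_t)+\Delta_t$ and $\Delta^*:=\sup_{t\in[0,1]}|\Delta_t|$, a one-line manipulation of the quotient gives
\begin{equation*}
D_{N,\psi}^*\bigl(Q_N(\mathcal{K}_M;\psi)\bigr)=\sup_{t\in[0,1]}\Bigl|\tfrac{\#(\mathcal{K}_M\cap A_t)}{N}-\tfrac{\operatorname{area}(A_t)}{\operatorname{area}(A)}\Bigr|\le \frac{2\Delta^*}{N},
\end{equation*}
and, since $\operatorname{area}(A)$ is a fixed positive constant, once we know $\Delta^*=o(M)$ we also get $N=M\operatorname{area}(A)+\Delta_1\asymp M$. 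Hence it suffices to prove the uniform estimate $\Delta^*=\mathcal{O}(M^{1/3}\log M)$.

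For the Diophantine input, suppose $1,\alpha,\beta$ is a $\mathbb{Q}$-basis of a cubic field $K$. For $(h_1,h_2)\in\mathbb{Z}^2\setminus\{0\}$ let $h_0$ be the nearest integer to $-(h_1\alpha+h_2\beta)$; then $N_{K/\mathbb{Q}}(h_0+h_1\alpha+h_2\beta)$ is a nonzero rational with bounded denominator, hence $\gg 1$ in absolute value, while its other two conjugate factors are $\mathcal{O}(\max(|h_1|,|h_2|))$. Therefore $\|h_1\alpha+h_2\beta\|\gg(\max(|h_1|,|h_2|))^{-2}$; equivalently, writing $\boldsymbol\alpha=(\alpha,\beta)$, the vector $\boldsymbol\alpha$ is badly approximable in $\mathbb{R}^2$, and each of $\alpha,\beta$ is a cubic irrational, hence of finite Diophantine type by Roth's theorem. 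A consequence I will use is that any two $\boldsymbol h,\boldsymbol h'\in\mathbb{Z}^2$ with $\|\boldsymbol h\cdot\boldsymbol\alpha\|,\|\boldsymbol h'\cdot\boldsymbol\alpha\|\le\varepsilon$ differ by a vector of length $\gg\varepsilon^{-1/2}$, so
\begin{equation*}
\#\{\boldsymbol h\in\mathbb{Z}^2:\ |\boldsymbol h|\le R,\ \|\boldsymbol h\cdot\boldsymbol\alpha\|\le\varepsilon\}\ll R^2\varepsilon+1 .
\end{equation*}

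Next I would estimate $\Delta_t$ by a Fourier/Erd\H{o}s--Tur\'an--Koksma argument. Sandwiching $A_t$ between its inner and outer $1/H$-neighbourhoods (whose symmetric difference has area $\ll 1/H$, since $\partial A_t$ has length $\ll 1$ uniformly in $t$) and smoothing at scale $1/H$, one obtains $|\Delta_t|\ll M/H+\sum_{0<|\boldsymbol h|\lesssim H}|\widehat{1_{A_t}}(\boldsymbol h)|\,\min\bigl(M,\|\boldsymbol h\cdot\boldsymbol\alpha\|^{-1}\bigr)$. The non-vanishing curvature enters in the estimate of the Fourier coefficients: for $\boldsymbol h$ with $h_2\ne 0$ the oscillatory integral over the graph part of $\partial A_t$ has phase with second derivative bounded below in modulus by $c|h_2|$, so van der Corput's lemma yields a factor $\ll|h_2|^{-1/2}$, while integration by parts on the straight, axis-parallel edges supplies the remaining factors; altogether $|\widehat{1_{A_t}}(\boldsymbol h)|\ll|h_2|^{-3/2}+|h_1h_2|^{-1}$ when $h_1h_2\ne 0$, and $|\widehat{1_{A_t}}(\boldsymbol h)|\ll|h_i|^{-1}$ on the coordinate axis $h_j=0$, uniformly in $t$. (Alternatively one may observe that $A_t$ is either convex or the set-difference of an axis-parallel rectangle and a convex set, in each case with piecewise-smooth boundary whose curved arc has non-vanishing curvature, and invoke the known planar discrepancy bounds for such sets — cf.\ \cite{BCGT2015} — together with the finite type of $\boldsymbol\alpha$, which disposes of the rectangular and axis-direction contributions with an error $\mathcal{O}(M^{\varepsilon})$.)

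Finally I would carry out the dyadic book-keeping on the exponential sum: grouping the frequencies according to $|\boldsymbol h|\sim R\le H$ and $\|\boldsymbol h\cdot\boldsymbol\alpha\|\sim 2^{-m}$, using the separation estimate of the second step (and $\|\boldsymbol h\cdot\boldsymbol\alpha\|\gg R^{-2}$, which bounds the number of relevant $m$) together with the decay $|h_2|^{-3/2}$, the genuinely two-dimensional part comes out as $\ll H^{1/2}\log M+M^{1/4}\log M$. Combining with the $\mathcal{O}(M/H)$ boundary-layer term and the $\mathcal{O}(M^{\varepsilon})$ contributions and optimising with $H=M^{2/3}$ gives $\Delta^*\ll M^{1/3}\log M$, whence $D_{N,\psi}^*(Q_N(\mathcal{K}_M;\psi))\ll M^{1/3}\log M/M=M^{-2/3}\log M\asymp N^{-2/3}\log N$. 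I expect the main obstacle to be the Fourier-coefficient estimate for $1_{A_t}$ that is uniform in $t$ — in particular controlling simultaneously the curved part of the boundary (which requires the stationary-phase/van der Corput bound and the non-vanishing curvature) and the straight axis-parallel edges and corners (which produce the slowly decaying coordinate-axis frequencies), and arranging the sandwiching and smoothing so that all of these bounds survive; the cubic-field Diophantine estimates, by contrast, are classical.
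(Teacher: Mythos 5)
Your proposal is correct and follows essentially the same route as the paper: the reduction of $D^*_{N,\psi}(Q_N)$ to the counting error of the driver points over the regions under the graph of $\psi/L$ (your $2\Delta^*/N$ bound is exactly the paper's inequality \eqref{Discrep-QR} combined with $N\asymp M$), followed by the discrepancy estimate for the cubic Kronecker sequence against a region whose curved boundary has non-vanishing curvature. The only difference is one of packaging: where you re-derive that estimate from scratch (norm-form Diophantine bound, Erd\H{o}s--Tur\'an--Koksma, van der Corput decay $|\boldsymbol h|^{-3/2}+ (|h_1||h_2|)^{-1}$, dyadic summation with $H=M^{2/3}$), the paper extends the graph of $\psi L^{-1}$ to a closed convex curve via a B\'{e}zier construction and invokes the per-rectangle form \eqref{bound_BCGT2} of Brandolini et al.'s theorem \cite{BCGT2015} as a black box --- precisely the alternative you flag in your parenthetical remark.
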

	
	The proof  of this result  is presented  in Section~\ref{sec:PT1}. We also prove the following result which improves the previous bound by a factor of $\log N$. Before we can state this result, we introduce the following notation. Let $F_k$ denote the $k$-th Fibonacci number  given by $$F_1=F_2=1, F_k=F_{k-1}+F_{k-2} \mbox{ for } k\ge 3.$$ Let $$\{x\}=x-\lfloor x \rfloor$$ denote the fractional part of the non-negative real number $x$.
	\begin{theorem}\label{Th:lattice}
		Let an unnormalized density function $\psi:[0,1]\to \mathbb{R}_+$  be twice continuously differentiable and having non-vanishing curvature.  Assume that there exists a constant $L<\infty$ such that $\psi(x) \leq L$ for  all $x\in[0,1]$.
		
		Let
		\begin{eqnarray*}
			\mathcal{F}_k= \left\{ \Big(\frac{j}{F_k}, \big\{\frac{jF_{k-1}}{F_k}\big\} \Big) \mid j=1,\ldots,  F_k\right\}.
		\end{eqnarray*}
		Then the  discrepancy of $Q_N (\mathcal{F}_k; \psi) \subset [0,1]$ generated by the acceptance-rejection sampler using $\mathcal{F}_k$ as driver sequence satisfies
		\begin{eqnarray*}
			D^*_{N,\psi}( Q_N(\mathcal{F}_k; \psi))\le C'_{\psi, L} N^{-2/3},
		\end{eqnarray*} where $C'_{\psi, L}$ is a constant depending only on the target density $\psi$ and the constant $L$.
	\end{theorem}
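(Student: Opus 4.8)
The plan is to relate the discrepancy of $Q_N(\mathcal{F}_k;\psi)$ to a lattice point count for the Fibonacci lattice over a one-parameter family of planar regions, and then to estimate that count using the non-vanishing curvature of the boundary together with the Diophantine properties of $F_{k-1}/F_k$. Concretely, write $M=F_k$ and $f(x)=\psi(x)/L$, so $0\le f\le 1$, and for $t\in[0,1]$ set $A_t=\{(x,y)\in[0,1]^2:x<t,\ 0\le y\le f(x)\}$, $A=A_1$. By Algorithm~\ref{DAR} the accepted points are exactly $\mathcal{F}_k\cap A$, so $N=\#(\mathcal{F}_k\cap A)$ and the number of accepted points with first coordinate $<t$ equals $\#(\mathcal{F}_k\cap A_t)$. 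Since $\lambda(A_t)=\frac1L\int_0^t\psi$ and $\lambda(A)=C/L$, one has $\frac1C\int_0^t\psi=\lambda(A_t)/\lambda(A)$; writing $R_M(B)=\#(\mathcal{F}_k\cap B)-M\lambda(B)$ and expanding,
\[
\frac{\#(\mathcal{F}_k\cap A_t)}{N}-\frac{\lambda(A_t)}{\lambda(A)}=\frac{\lambda(A)\,R_M(A_t)-\lambda(A_t)\,R_M(A)}{\lambda(A)\,N}.
\]
Using $0\le\lambda(A_t)\le\lambda(A)$ and $N\ge\tfrac12\lambda(A)M$ (which follows from Step~3 applied to $A$), this gives $D^*_{N,\psi}(Q_N(\mathcal{F}_k;\psi))\le \frac{C_0}{M}\sup_{t\in[0,1]}|R_M(A_t)|$ with $C_0$ depending only on $C/L$. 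As $N=\lambda(A)M+O(M^{1/3})\asymp M$, everything reduces to proving $\sup_{t\in[0,1]}|\#(\mathcal{F}_k\cap A_t)-F_k\lambda(A_t)|=O(F_k^{1/3})$, with constant depending only on $\psi$ and $L$.

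The second step records the geometry of $A_t$. Because $\psi$ is $C^2$ with non-vanishing curvature, $f$ is either strictly concave or strictly convex. If $f$ is concave, $A_t$ is convex; if $f$ is convex, then $A_t=([0,t)\times[0,1])\setminus B_t$ with $B_t=\{(x,y)\in[0,1]^2:x<t,\ y>f(x)\}$ convex, while $\#(\mathcal{F}_k\cap([0,t)\times[0,1]))=\#\{1\le j\le F_k:j/F_k<t\}=F_kt+O(1)$. Hence in either case $R_M(A_t)=\pm R_M(K_t)+O(1)$ for a convex $K_t\subseteq[0,1]^2$ whose boundary consists of finitely many straight segments (on the sides of the unit square or on $x=t$) together with exactly one arc of the fixed curve $y=f(x)$; this arc is $C^2$ with curvature bounded away from $0$ and $\infty$, uniformly in $t$ by compactness. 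After rounding off the finitely many corners (an $O(1)$ change in the lattice point count), it suffices to bound $R_M(K)$ for $K$ ranging over a compact family of convex bodies determined by $\psi$ and $L$.

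The third step is the crux. I would prove $|\#(\mathcal{F}_k\cap K)-F_k\lambda(K)|\le C(\psi,L)\,F_k^{1/3}$ for each such $K$. Here $\mathcal{F}_k=\Lambda_k\cap[0,1)^2$, where $\Lambda_k$ is generated by $(1/F_k,F_{k-1}/F_k)$ and $(0,1)$, has covolume $1/F_k$, and has dual $\Lambda_k^\ast=\{(h_1,h_2)\in\mathbb{Z}^2:h_1+h_2F_{k-1}\equiv0\pmod{F_k}\}$ of covolume $F_k$; by Poisson summation $\#(\mathcal{F}_k\cap K)-F_k\lambda(K)=F_k\sum_{\boldsymbol h\in\Lambda_k^\ast\setminus\{\boldsymbol 0\}}\widehat{1_K}(\boldsymbol h)$. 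Two inputs are combined. First, the non-vanishing curvature of the curved part of $\partial K$ gives by stationary phase $|\widehat{1_K}(\boldsymbol h)|\ll|\boldsymbol h|^{-3/2}$, while the straight segments and corners contribute only $O(\log F_k)=o(F_k^{1/3})$ (being axis-parallel or anchored at a vertex of the cube). Second, since every partial quotient of $F_{k-1}/F_k=[0;1,1,\dots,1]$ equals $1$, the lattice $\Lambda_k^\ast$ has shortest nonzero vector of norm $\gg F_k^{1/2}$, hence $\#\{\boldsymbol h\in\Lambda_k^\ast:|\boldsymbol h|\le R\}\ll 1+R^2/F_k$. Smoothing $1_K$ at scale $\rho$ (convolution with a Fej\'er-type kernel) truncates the Fourier series effectively at $|\boldsymbol h|\ll\rho^{-1}$ at the cost of $O(F_k\rho)$ in the main term; estimating the truncated sum by the two facts above yields an error $O(F_k\rho+\rho^{-1/2})$, and the choice $\rho\asymp F_k^{-2/3}$ gives $O(F_k^{1/3})$. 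This is the dimension-one ($s=2$) instance of the lattice point / discrepancy estimates for convex sets with smooth boundary of non-vanishing curvature referred to in the introduction; the boundedness of the partial quotients of $F_{k-1}/F_k$ is precisely what removes the logarithmic factor present in Theorem~\ref{Th:2thirds}, where the Diophantine input instead comes from the degree-$3$ number field.

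Finally, Steps~1--3 give $\sup_t|R_M(A_t)|=O(F_k^{1/3})$, hence $N=\lambda(A)F_k+O(F_k^{1/3})\asymp F_k$ and $F_k^{-2/3}\asymp N^{-2/3}$, so $D^*_{N,\psi}(Q_N(\mathcal{F}_k;\psi))=O(F_k^{-2/3})=O(N^{-2/3})$ with constant depending only on $\psi$ and $L$ (the finitely many small values of $k$ being absorbed into the constant); by Proposition~\ref{B:lower} this exponent is best possible. The main obstacle is Step~3: obtaining the sharp exponent $1/3$ for the number of Fibonacci lattice points in a convex region with one curved boundary arc, uniformly over the family $\{K_t\}_{t\in[0,1]}$. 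Steps~1, 2 and~4 are routine bookkeeping once that estimate is in hand.
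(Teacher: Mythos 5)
Your proposal is correct in outline and follows the same skeleton as the paper, but it handles the decisive counting step by a genuinely different device. Your Step~1 is exactly the reduction in the paper's Theorem~\ref{Lem:2Dis}: the discrepancy of the accepted points is pulled back to the counting error of the driver points over the sets $J_t^*=([0,t)\times[0,1))\cap A$ (your $A_t$), with the factor $M/N$ controlled by applying the same estimate to $A$ itself; your Step~2 corresponds to the paper's B\'ezier extension of the graph of $\psi L^{-1}$ to a convex $\Omega$, so that $J_t^*$ is an intersection of a convex body with non-vanishing boundary curvature and an axis-parallel rectangle. From there the paper does not redo Poisson summation by hand: it invokes the Erd\H{o}s--Tur\'an inequality and the Fourier bounds $|\hat 1_{J_t^*}(\boldsymbol n)|\lesssim |\boldsymbol n|^{-3/2}+\frac{1}{(1+|n_1|)(1+|n_2|)}$ from \cite[Lemmas~10--11]{BCGT2015}, packages everything into the criterion $\mathcal{Q}_R$ of \eqref{CriterionR}, and then, with $R=F_{\lceil 2k/3\rceil}$, bounds the dual-lattice sum $\sum_{F_k\mid n_1+n_2F_{k-1},\,0<|\boldsymbol n|<R}|\boldsymbol n|^{-3/2}$ by a Fibonacci-block decomposition, counting solutions in each block via the star-discrepancy of $\mathcal{F}_k$ itself together with the continued-fraction bound $|n_1|\ge F_{k-i}$ in \eqref{n1Bound}. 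You instead smooth at scale $\rho\asymp F_k^{-2/3}$ and use geometry of numbers: $\lambda_1(\Lambda_k^*)\gg F_k^{1/2}$ (the all-ones partial quotients) and $\#\{\boldsymbol h\in\Lambda_k^*:|\boldsymbol h|\le R\}\ll 1+R^2/F_k$, which gives $F_k\rho+F_k^{1/4}+\rho^{-1/2}\ll F_k^{1/3}$; this is a cleaner and more conceptual route to the same $F_k^{-2/3}$, at the price of having to justify the stationary-phase decay and the majorant/smoothing machinery yourself rather than citing it, and it does not produce the paper's reusable criterion $\mathcal{Q}_R$ for general driver sequences. Two small blemishes, neither fatal: the straight-edge/product-type contribution is $O((\log F_k)^2)$ rather than $O(\log F_k)$ (still $o(F_k^{1/3})$), and the aside that rounding corners changes the lattice point count by $O(1)$ is wrong as stated (a fixed rounding changes the count by about $F_k$ times the removed area) but is also unnecessary, since the straight segments are already covered by the $\frac{1}{(1+|h_1|)(1+|h_2|)}$ bound you use.
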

	
	
	To prove this result we first introduce a  quality criterion for driver sequences in Section~\ref{sec_quality} (see Equation~\eqref{CriterionR}) and then prove a bound on this criterion for the set $\mathcal{F}_k$ in Section~\ref{sec:PT2}.

	\subsection{Integration error}
	
	In \cite{AD2015}, Aistleitner and Dick proved a generalized Koksma-Hlawka inequality for non-uniform measures which states that for any function having bounded variation in the sense of Hardy and Krause (abbreviated as $V_{HK}$), the integration error can be bounded by a product of  the variation of the integrand function times the discrepancy of the quadrature points. With the definition of $V_{HK}$ given in \cite[Section~2]{AD2015}, the following  inequality holds.

	\begin{proposition}\label{Prop:GeneralKHIneq}
		Let $f$ be a measurable function on $[0,1]^{s-1}$ which has bounded variation in the sense of Hardy and Krause,  $V_{HK}(f) <\infty$. Let $\mu$ be a normalized Borel measure on $[0,1]^{s-1}$, and let $P_N=\{\bsp_j \mid j = 1, \ldots, N \}$ be a point set in $[0,1]^{s-1}$. Then 
		\begin{equation*}
		\Big| \frac{1}{N}\sum_{j=1}^{N} f(\bsp_j)-\int_{[0,1]^{s-1}} f(\bsx) \rd \mu(\bsx)\Big|\le V_{HK}(f) D_{N,\mu}^*( P_N).
		\end{equation*}
	\end{proposition}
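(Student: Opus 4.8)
The plan is to express the integration error as a single linear functional evaluated at $f$, and then to decompose $f$ by an iterated fundamental-theorem-of-calculus identity so that all the information about the point set and the measure is carried by the local discrepancies
\[
\Delta(\boldsymbol t)=\frac1N\sum_{j=1}^N 1_{[\boldsymbol 0,\boldsymbol t)}(\bsp_j)-\mu([\boldsymbol 0,\boldsymbol t)).
\]
Write $d=s-1$ and $D=\{1,\dots,s-1\}$, and consider the error functional $\mathrm{Err}(g)=\frac1N\sum_{j=1}^N g(\bsp_j)-\int_{[0,1]^{d}} g\,\mathrm d\mu$, which is linear in $g$. Two elementary properties drive the argument. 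First, since $\mu$ is normalized, $\mathrm{Err}$ annihilates constants: $\mathrm{Err}(\mathbf 1)=1-\mu([0,1]^{d})=0$. Second, $\mathrm{Err}$ sends a box indicator to the corresponding local discrepancy, $\mathrm{Err}\big(1_{[\boldsymbol 0,\boldsymbol t)}\big)=\Delta(\boldsymbol t)$. The entire proof consists in writing $f$ as a superposition of a constant and of box indicators, and then reading off $\mathrm{Err}(f)$.

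The first step is the representation obtained by integrating by parts in each coordinate in turn. Writing $(\boldsymbol t_u,\boldsymbol 1)$ for the point whose coordinates indexed by $u$ equal $\boldsymbol t_u$ and whose remaining coordinates are set to $1$, for smooth $f$ one has
\[
f(\boldsymbol x)=f(\boldsymbol 1)+\sum_{\emptyset\neq u\subseteq D}(-1)^{|u|}\int_{[0,1]^{|u|}} 1_{[\boldsymbol 0,(\boldsymbol t_u,\boldsymbol 1))}(\boldsymbol x)\,\frac{\partial^{|u|}f}{\partial \boldsymbol x_u}(\boldsymbol t_u,\boldsymbol 1)\,\mathrm d\boldsymbol t_u.
\]
For a general function of bounded Hardy--Krause variation the partial derivatives need not exist, and I would instead interpret each summand through the Lebesgue--Stieltjes signed measure $\mathrm df_u$ induced on $[0,1]^{|u|}$ by the face function $\boldsymbol t_u\mapsto f(\boldsymbol t_u,\boldsymbol 1)$; the identity then becomes an identity between Lebesgue--Stieltjes integrals, valid for every such $f$.

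Applying the linear functional $\mathrm{Err}$ to this representation and interchanging it with the integrals --- a Fubini-type interchange legitimate because $\mu$ is finite, $P_N$ is a finite set, and each $\mathrm df_u$ has finite total variation --- the constant term $f(\boldsymbol 1)$ drops out (as $\mathrm{Err}$ kills constants) and one obtains
\[
\mathrm{Err}(f)=\sum_{\emptyset\neq u\subseteq D}(-1)^{|u|}\int_{[0,1]^{|u|}}\Delta\big((\boldsymbol t_u,\boldsymbol 1)\big)\,\mathrm df_u(\boldsymbol t_u).
\]
The measure $\mu$ enters only through $\Delta$, so passing from the classical uniform setting to an arbitrary normalized Borel measure requires no structural change, the normalization being exactly what eliminates the constant term. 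Now by the definition of the star-discrepancy, $|\Delta(\boldsymbol t)|\le D^*_{N,\mu}(P_N)$ for every $\boldsymbol t\in[0,1]^{d}$, and by the definition of the Hardy--Krause variation the sum over nonempty $u$ of the total variations $\int_{[0,1]^{|u|}}|\mathrm df_u|$ equals $V_{HK}(f)$. Taking absolute values therefore yields
\[
|\mathrm{Err}(f)|\le D^*_{N,\mu}(P_N)\sum_{\emptyset\neq u\subseteq D}\int_{[0,1]^{|u|}}\big|\mathrm df_u\big|=V_{HK}(f)\,D^*_{N,\mu}(P_N),
\]
which is the claimed inequality.

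The main obstacle lies entirely in the low-regularity bookkeeping of the representation step: making precise the signed measures $\mathrm df_u$ attached to a Hardy--Krause function, verifying that the integration-by-parts identity survives the passage from smooth $f$ to general $f$ of bounded variation (either by smooth approximation together with a limiting argument, or by a direct Lebesgue--Stieltjes computation), and justifying the Fubini interchange. Once the representation is in place, the non-uniformity of $\mu$ causes no further difficulty and the final estimate is immediate. This is precisely the line of argument carried out by Aistleitner and Dick in \cite{AD2015}, whose definition of $V_{HK}$ I would adopt verbatim.
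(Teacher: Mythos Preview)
The paper does not give its own proof of this proposition: it is stated as a result of Aistleitner and Dick \cite{AD2015} and simply quoted, with the definition of $V_{HK}$ taken from \cite[Section~2]{AD2015}. Your sketch is the standard Hlawka--Zaremba identity argument (decompose $f$ into box indicators via iterated integration by parts, apply the error functional, and bound each term by the star-discrepancy), which is precisely the approach carried out in \cite{AD2015}; so your proposal is correct and aligns with the source the paper cites.
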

	In the following we use this result for $s=2$. In this case, if the function  $f$ is absolutely continuous,
	then the variation in the sense of Hardy and Krause
	can be written as $$V_{HK} (f)=\int_0^1 |f'(x)|\rd x.$$
	Theorem~\ref{Th:2thirds}, Theorem~\ref{Th:lattice} and Proposition~\ref{Prop:GeneralKHIneq} imply the following result.
	
	\begin{corollary}\label{Corr:integErr} Let $f:[0,1]\to \mathbb{R}$ have bounded variation $V_{HK}(f) <\infty$. Let $\psi$  be non-negative, twice continuously differentiable and having non-vanishing curvature. Then we have
		\begin{align*}
		&\Big| \frac{1}{N}\sum_{x \in R_N} f(x)-\frac{1}{C}\int_{0}^1 f(x)\psi(x) \rd x\Big|\\
		\le&
		\left\{\begin{array}{ll}
		C_{\psi, L} V_{HK}(f)  N^{-2/3}\log N, &\quad  \mbox{for } R_N = Q_N(\mathcal{K}_M; \psi),\\[0.25cm]
		C'_{\psi, L} V_{HK}(f)  N^{-2/3}, &\quad \mbox{for } R_N = Q_N(\mathcal{F}_k; \psi),\\[0.25cm]
		\end{array}\right.
		\end{align*}
		where $ N$ is the number of points we accepted, $C=\int_{[0,1]} \psi(x) \rd x>0$ and where $C_{\psi, L}$  and $C'_{\psi, L}$ are  constants depending only on the target density $\psi$ and the constant $L$.
	\end{corollary}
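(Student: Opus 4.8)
The plan is to combine the two discrepancy bounds with the generalized Koksma--Hlawka inequality of Proposition~\ref{Prop:GeneralKHIneq}; since those three results carry the real content, what remains is essentially a packaging argument.

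First I would introduce the normalized Borel measure $\mu$ on $[0,1]$ defined by $\rd\mu(x) = \frac{1}{C}\psi(x)\rd x$, where $C = \int_0^1 \psi(x)\rd x$. For this to make sense one needs $C>0$, which holds because $\psi$, having non-vanishing curvature, is not identically zero, and being continuous and non-negative it then has a strictly positive integral (this positivity is in any case part of the hypotheses of the corollary). With this measure in hand, a direct comparison of the definition of $D_{N,\psi}^*$ given at the start of the paper with the discrepancy $D_{N,\mu}^*$ appearing in Proposition~\ref{Prop:GeneralKHIneq} shows that $D_{N,\mu}^*(P_N) = D_{N,\psi}^*(P_N)$ for every point set $P_N\subset[0,1]$, since $\frac{1}{C}\int_{[0,t)}\psi(z)\rd z = \mu([0,t))$ for all $t\in[0,1]$.

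Next I would apply Proposition~\ref{Prop:GeneralKHIneq} with $s=2$, the measure $\mu$ just defined, and the point set $P_N = R_N$, where $R_N$ is either $Q_N(\mathcal{K}_M;\psi)$ or $Q_N(\mathcal{F}_k;\psi)$. Using $\int_0^1 f(x)\rd\mu(x) = \frac{1}{C}\int_0^1 f(x)\psi(x)\rd x$ together with the identification of the two discrepancies above, and invoking the hypothesis $V_{HK}(f)<\infty$, this yields
\[
\Big| \frac{1}{N}\sum_{x\in R_N} f(x) - \frac{1}{C}\int_{0}^1 f(x)\psi(x)\rd x\Big| \le V_{HK}(f)\, D_{N,\psi}^*(R_N).
\]

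Finally I would insert the discrepancy estimates. The assumptions imposed on $\psi$ here --- non-negative, twice continuously differentiable, non-vanishing curvature, and bounded by $L$ --- are precisely those of Theorem~\ref{Th:2thirds} and Theorem~\ref{Th:lattice}, so $D_{N,\psi}^*(Q_N(\mathcal{K}_M;\psi)) \le C_{\psi,L}\, N^{-2/3}\log N$ and $D_{N,\psi}^*(Q_N(\mathcal{F}_k;\psi)) \le C'_{\psi,L}\, N^{-2/3}$. Substituting the appropriate bound according to the choice of $R_N$ into the displayed inequality gives the two cases of the claimed estimate. The only genuinely non-trivial ingredients are the two discrepancy theorems; granting those, there is no real obstacle, the remaining points being only the verification that $\mu$ is a well-defined probability measure and that the two notions of star-discrepancy coincide.
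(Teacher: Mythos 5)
Your proposal is correct and follows exactly the route the paper intends: the paper gives no separate proof but simply states that Theorem~\ref{Th:2thirds}, Theorem~\ref{Th:lattice} and Proposition~\ref{Prop:GeneralKHIneq} imply the corollary, which is precisely your packaging of the normalized measure $\mu = C^{-1}\psi\,\rd x$, the identification $D_{N,\mu}^* = D_{N,\psi}^*$, and the substitution of the two discrepancy bounds. Your additional remarks on why $C>0$ and why the two notions of discrepancy coincide are sensible bookkeeping that the paper leaves implicit.
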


	\section{Numerical experiments}
	
	To demonstrate the performance of  the deterministic acceptance-rejection samplers,
	we consider two density functions defined on $[0,1]$ and calculate the star-discrepancy of the
	samples generated by the proposed methods.  For comparison purpose, the convergence rate  for the original algorithm  using {random} points and regular grids as driver sequence are also presented. Note that numerical results are presented in a log-log scale in the figures. 
	\begin{example} Let the target density $\psi:[0,1]\to \mathbb{R}_+$ be given by
		\begin{equation*}
		\psi(x)= \frac{3}{16} \left(4 \sin\left( \frac{\pi x }{ 2} \right) - x^{5/2} - x^2 \right).
		\end{equation*}
	\end{example}

	This density function satisfies all the conditions in our  theory since it is twice continuously differentiable and strictly concave. The numerical results shown in Figure~\ref{Fig:E1} suggest an empirical convergence rate of approximately $N^{-0.75}$ for samples of $\psi$ obtained by the  deterministic acceptance-rejection sampler using the driver sequence
	\begin{equation*}
	\mathcal{K}_M= \{( j \alpha, j \beta ) ~~ mod~~1 \mid j=1,\ldots,M\}.
	\end{equation*}
	In the test we choose the real root of the polynomial $x^3 + 2x + 2$. Eisenstein's criterion implies that this polynomial is irreducible over $\mathbb{Z}$. The root $\xi$ is approximated by $-0.770916997059248$ and we set $\alpha = \xi$ and $\beta = \xi^2$.
	
	
	Similarly, using the Fibonacci lattice point set
	\begin{equation*}
	\mathcal{F}_k= \{({j}/{F_k}, \{{jF_{k-1}}/{F_k}\} ) \mid j=1,\ldots, F_k\},
	\end{equation*}
	the numerical experiments show a convergence rate of approximately $N^{-0.8}$. The original acceptance-rejection sampler in the random setting produces samples whose star-discrepancy converges at roughly $N^{-1/2}$. A similar result is observed for the regular grid $G_M$ given by
	\begin{equation*}
	G_M=\left\{\Big(\frac{j}{\lfloor \sqrt{M}\rfloor}, \frac{m}{\lfloor\sqrt{M}\rfloor}\Big) \mid \, j,m=1,\ldots,\lfloor\sqrt{M}\rfloor\right\}. 
	\end{equation*}
	It is worth noticing that Fibonacci lattice points always provided the smallest value of the discrepancy. The acceptance rate is roughly $69\%$ for the first example.
	
	\begin{figure}[!htp]
		\begin{center}
			\includegraphics[trim=0.2cm 0.05cm 0.03cm 0.2cm, clip=true,width=0.8\textwidth]{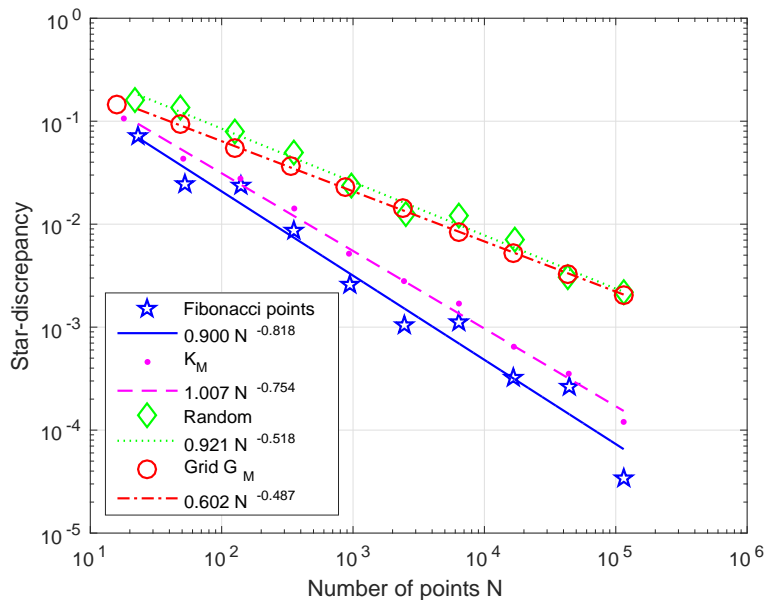}\\
			\begin{minipage}{0.8\textwidth}\caption{{\footnotesize  Convergence order of the star-discrepancy with respect to  different driver sequences of Example~1.}}\label{Fig:E1}
			\end{minipage}
		\end{center}
	\end{figure}
	
	\begin{example}
		Consider the twice continuously differentiable and strictly convex target density function
		\begin{center}
			\begin{equation*}
			\psi(x)=
			\left\{\begin{array}{ll}
			-\frac{1}{2} x^4-\frac{1}{6}x^2+\frac{107}{108},\quad & x\in [0, \frac{1}{3}), \\ [0.3cm]
			- \frac{3}{4}x^4-\frac{2}{27}x+1,\quad & x\in [\frac{1}{3}, 1].
			\end{array} \right.
			\end{equation*}
		\end{center}
	\end{example}
	
	We again observe  much better results with deterministic driver sequences, $\mathcal{F}_k$  and $\mathcal{K}_M$, compared with pseudo-random points  and regular grids as shown in Figure~\ref{Fig:E2}. As observed in the first example, a Fibonacci lattice point set $\mathcal{F}_k$ yields a slightly better numerical result compared to the point set $\mathcal{K}_M$ in this experiment. The acceptance rate is around $80\%$ for Example~2.
	\begin{figure}[!htp]
		\begin{center}
			\includegraphics[trim=0.2cm 0.05cm 0.03cm 0.2cm, clip=true ,width=0.8\textwidth, angle=360]{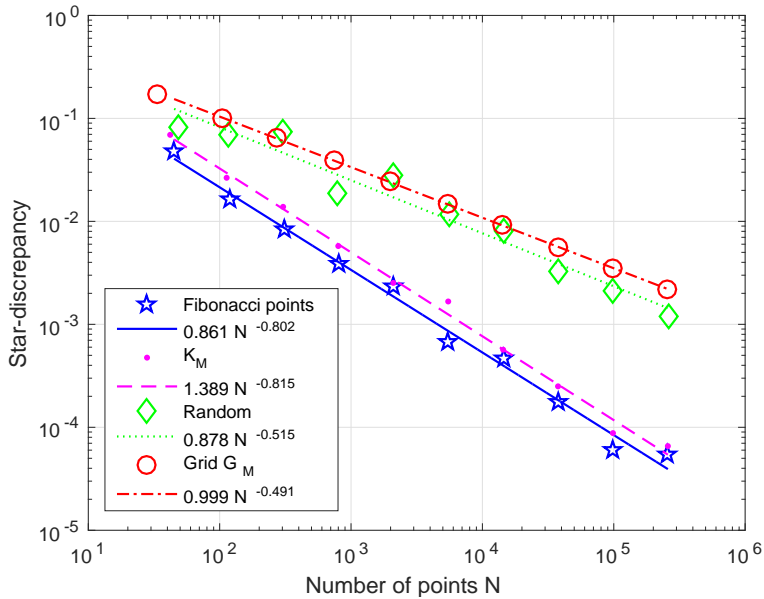}\\
			\begin{minipage}{0.8\textwidth}\caption{{\footnotesize Convergence order of the star-discrepancy with respect to  different driver sequences of Example~2.}}\label{Fig:E2}
			\end{minipage}
		\end{center}
	\end{figure}

	\section{Proof of Theorem~\ref{Th:2thirds}}\label{sec:PT1}
	The proof of Theorem~\ref{Th:2thirds} is motivated by a recent paper due to
	Brandolini et  al. \cite{BCGT2015}.  Therein they proved an upper bound for the following discrepancy associated with a convex domain with smooth boundary. We recall the main results pertaining to our paper here.

	Let $\Omega$ be a bounded convex domain in $\mathbb{R}^2$ such that the boundary curve is twice continuously differentiable and having non-vanishing curvature.
	For $\boldsymbol{t}=(t_1,t_2)\in (0,1)^2$ and  any $\bsx \in \mathbb{R}^2$,
	let
	\begin{equation*}
	I(\boldsymbol{t},\bsx)= \bigcup_{\boldsymbol{m}\in \mathbb{Z}^2} ([0,t_1]\times [0,t_2]+\bsx+\boldsymbol{m}).
	\end{equation*}
	Consider the following discrepancy defined with respect to the set  $\Omega$ and a point set $P_N=\{\bsp_j \mid j=1, \ldots, N\}$ in $\mathbb{R}^2$,
	\begin{eqnarray}\label{Dis:omega}
	\widetilde{D}_N^*(P_N,\Omega)=\sup_{\substack{\boldsymbol{t}\in [0,1]^2\\ \bsx\in \mathbb{R}^2}} \Big| \frac{1}{N}\sum_{j=1}^{N} \sum_{\boldsymbol{m}\in \mathbb{Z}^2} 1_{I(\boldsymbol{t},\bsx)\cap \Omega} (\bsx_j +\boldsymbol{m}) 
	-\lambda(I(\boldsymbol{t},\bsx)\cap \Omega)\Big|,
	\end{eqnarray}where $\lambda$ denotes the Lebesgue measure.
	
	The following result is \cite[Theorem~2]{BCGT2015}.
	\begin{proposition}\label{Prop:Omega} Suppose that $\Omega$ is a convex domain in $\mathbb{R}^2$ such that the boundary curve is twice continuously differentiable and having non-vanishing  curvature. Let $1,\alpha,\beta$ be a basis of a number field over $\mathbb{Q}$ of degree $3$. Let $$\mathcal{K}_N= \{ \bsx_j = ( j \alpha, j \beta ) \mid j = 1,\ldots,N\}. $$ For the  discrepancy defined in Equation~\eqref{Dis:omega}, we have
		$$ \widetilde{D}_N^*(\mathcal{K}_N,\Omega) \le cN^{-2/3} \log N,$$
		where the constant $c$ depends on the minimum and  maximum of the curvature of the boundary of $\Omega$ and the length of the boundary, and on the numbers
		$\alpha$ and $\beta$.
	\end{proposition}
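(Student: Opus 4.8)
The plan is to prove the bound by Fourier analysis on the torus $\mathbb{R}^2/\mathbb{Z}^2$, combining the curvature-induced decay of $\widehat{1_\Omega}$ with a Diophantine lower bound supplied by the cubic field. Write $\boldsymbol{\theta}=(\alpha,\beta)$ and $e(u)=e^{2\pi i u}$, so that $\boldsymbol{x}_j=(j\alpha,j\beta)$ and $\boldsymbol{n}\cdot\boldsymbol{x}_j=j(n_1\alpha+n_2\beta)$. Since the test set $I(\boldsymbol{t},\boldsymbol{x})\cap\Omega$ is the product of the $\mathbb{Z}^2$-periodic box $b_{\boldsymbol{t},\boldsymbol{x}}=1_{I(\boldsymbol{t},\boldsymbol{x})}$ and the fixed bounded set $1_\Omega$, I would first expand $b_{\boldsymbol{t},\boldsymbol{x}}$ in a Fourier series and apply Poisson summation to the periodization of $1_\Omega$. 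The diagonal frequency term reproduces the area $\lambda(I(\boldsymbol{t},\boldsymbol{x})\cap\Omega)$ and cancels, leaving an error that is a sum over nonzero frequencies $\boldsymbol{n}=\boldsymbol{k}+\boldsymbol{l}$ of $\widehat{b}(\boldsymbol{k})\,\widehat{1_\Omega}(\boldsymbol{l})$ weighted by the exponential sum $\tfrac1N\sum_{j=1}^N e(j\,\boldsymbol{n}\cdot\boldsymbol{\theta})$.

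Because $1_\Omega$ is discontinuous this series is not absolutely convergent and must be truncated, so I would replace $1_\Omega$ by two-sided trigonometric approximants $\chi_-\le 1_\Omega\le\chi_+$ of degree at most $M$ whose integrals differ by order $M^{-1}$ (a boundary strip of width $\sim 1/M$) and whose coefficients inherit the decay of $\widehat{1_\Omega}$. The box contributes the product kernel $|\widehat b(\boldsymbol{k})|\le \min(1,|k_1|^{-1})\min(1,|k_2|^{-1})$ uniformly in $\boldsymbol{t}$ and $\boldsymbol{x}$, which is precisely what yields uniformity over the family of test boxes. The two analytic inputs are: (i) the van der Corput / stationary-phase estimate $|\widehat{1_\Omega}(\boldsymbol{\xi})|\le c|\boldsymbol{\xi}|^{-3/2}$, valid because $\partial\Omega$ is $C^2$ with non-vanishing curvature, the exponent $3/2$ being exactly the source of the $2/3$ rate; and (ii) the elementary geometric bound $|\sum_{j=1}^N e(j u)|\le\min(N,\tfrac12\|u\|^{-1})$.

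The arithmetic heart is the Diophantine lower bound. Writing $\gamma=n_0+n_1\alpha+n_2\beta$ with $n_0$ the nearest integer to $n_1\alpha+n_2\beta$, so that $|\gamma|=\|n_1\alpha+n_2\beta\|$, the hypothesis that $1,\alpha,\beta$ is a basis of a cubic field makes $\gamma$ a nonzero element of a degree-$3$ field; its field norm is a nonzero rational number with denominator bounded independently of $\boldsymbol{n}$, hence bounded below in absolute value, while its two conjugates are $O(\max(|n_1|,|n_2|))$. Multiplying the three and rearranging gives $\|n_1\alpha+n_2\beta\|\ge c\,\max(|n_1|,|n_2|)^{-2}$ for all $(n_1,n_2)\ne(0,0)$, and hence $\|\boldsymbol{n}\cdot\boldsymbol{\theta}\|^{-1}\le c^{-1}|\boldsymbol{n}|^2$.

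Finally I would assemble the pieces: the error is at most a hyperbolic-cross sum $\sum_{0<|\boldsymbol{n}|\le M}$ of the convolved box-and-curvature coefficients times $\tfrac1N\min(N,\|\boldsymbol{n}\cdot\boldsymbol{\theta}\|^{-1})$, plus the truncation gap of order $M^{-1}$. Estimating this sum by a dyadic decomposition in $|\boldsymbol{n}|$ and in $\|\boldsymbol{n}\cdot\boldsymbol{\theta}\|$, using (i), (ii) and the Diophantine bound, produces a contribution of order $M^{1/2}(\log M)\,N^{-1}$; balancing this against the gap $M^{-1}$ by the choice $M\asymp N^{2/3}$ yields $cN^{-2/3}\log N$, with the constant depending on the extrema of the curvature and the length of $\partial\Omega$ and on $\alpha,\beta$. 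The main obstacle I anticipate is the combined bookkeeping: constructing the two-sided approximants so that their coefficients carry simultaneously the product box-decay and the $|\boldsymbol{n}|^{-3/2}$ curvature decay uniformly in $(\boldsymbol{t},\boldsymbol{x})$, and then summing the resulting convolution against the Diophantine exponential-sum bound tightly enough to lose only a single power of $\log N$.
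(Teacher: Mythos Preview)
The paper does not prove Proposition~\ref{Prop:Omega} at all: it is quoted verbatim as \cite[Theorem~2]{BCGT2015}, so there is no in-paper argument to compare against. That said, your sketch is essentially the proof strategy of \cite{BCGT2015}, as can be seen from the ingredients the present paper later imports from that reference: the generalized Erd\H{o}s--Tur\'an inequality (Proposition~\ref{Prop:ETineq}) plays the role of your two-sided trigonometric approximants with truncation gap $R^{-1}$, and the coefficient bounds $|\widehat{H}_R(\boldsymbol{n})|,\ |\widehat{1}_{J_t^*}(\boldsymbol{n})|\lesssim |\boldsymbol{n}|^{-3/2}+(1+|n_1|)^{-1}(1+|n_2|)^{-1}$ quoted from \cite[Lemmas~10--11]{BCGT2015} are exactly your ``curvature decay plus box kernel'' input. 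Your Diophantine step---bounding $\|n_1\alpha+n_2\beta\|$ from below via the nonvanishing of the field norm of $n_0+n_1\alpha+n_2\beta$ in the cubic field---is also the arithmetic mechanism used in \cite{BCGT2015}. The only point where you are perhaps optimistic is the final summation: the crude pointwise bound $\|\boldsymbol{n}\cdot\boldsymbol{\theta}\|^{-1}\lesssim|\boldsymbol{n}|^{2}$ is not by itself strong enough to give $M^{1/2}(\log M)N^{-1}$, and \cite{BCGT2015} in fact couples the Diophantine input with a counting/spacing argument (how many $\boldsymbol{n}$ in a dyadic shell can have $\|\boldsymbol{n}\cdot\boldsymbol{\theta}\|$ in a given dyadic range) to get the stated rate. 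You anticipate this difficulty in your last sentence; it is the genuine work of the proof.
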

	
	The proof of \cite[Theorem~2]{BCGT2015} actually shows that a slightly more general statement holds, which we describe in the following. 
	
	For given $\boldsymbol{t}$ and $\boldsymbol{x}$, the set $I(\boldsymbol{t}, \boldsymbol{x})$ is the union of infinitely many rectangles of the form $[0, t_1] \times [0, t_2] + \boldsymbol{x} + \boldsymbol{m}$, where $\boldsymbol{m} \in \mathbb{Z}^2$. Let $K_1, \ldots, K_q$ denote all those rectangles which have non-empty intersection with $\Omega$, i.e., $K_r = [0, t_1] \times [0, t_2] + \boldsymbol{x} + \boldsymbol{m}_r$ for suitable choices of $\boldsymbol{m}_r \in \mathbb{Z}^2$ with $K_r \cap \Omega \neq \emptyset$. Then
	\begin{align}\label{bound_BCGT}
	& \frac{1}{N}\sum_{j=1}^{N} \sum_{\boldsymbol{m}\in \mathbb{Z}^2} 1_{I(\boldsymbol{t},\bsx)\cap \Omega} (\bsx_j +\boldsymbol{m}) -\lambda(I(\boldsymbol{t},\bsx)\cap \Omega) \nonumber \\ = & \sum_{r=1}^q \left(\frac{1}{N} \sum_{j=1}^N \sum_{\boldsymbol{m} \in \mathbb{Z}^2}  1_{K_r \cap \Omega}(\bsx_j + \boldsymbol{m} )  - \lambda(K_r \cap \Omega) \right).
	\end{align}
	In \cite[p. 10]{BCGT2015} the authors state that they prove their result by showing the upper bound for a single piece $K_r$, i.e. they show the bound 
	\begin{align}\label{bound_BCGT2}
	 \sup_{\substack{\boldsymbol{t}\in [0,1]^2\\ \bsx\in \mathbb{R}^2}} \left|\frac{1}{N} \sum_{j=1}^N \sum_{\boldsymbol{m} \in \mathbb{Z}^2}  1_{K_r \cap \Omega}(\bsx_j + \boldsymbol{m} )  - \lambda(K_r \cap \Omega) \right| \le  c'_s N^{-2/3} \log N.
	\end{align}
	The bound on $\widetilde{D}_N(\mathcal{K}, \Omega)$ then follows by the triangle inequality. We use \eqref{bound_BCGT2} rather then Proposition~\ref{Prop:Omega} in the following.
	
	Note that we are only interested in sets $K_r$ which are contained in the unit square, i.e. $K_r \subset [0,1]^2$. In this case we have
	\begin{equation*}
	\frac{1}{N}\sum_{j=1}^{N} \sum_{\boldsymbol{m} \in \mathbb{Z}^2} 1_{K_r \cap \Omega} (\bsx_j + \boldsymbol{m}) = \frac{1}{N}\sum_{j=1}^{N} 1_{K_r \cap \Omega}(\boldsymbol{x}_j \pmod{1}),
	\end{equation*} 
	for any point set $\{\bsx_1\ldots,\bsx_{N}\} \subset \mathbb{R}^2$. Thus we obtain from \eqref{bound_BCGT2} that
	\begin{align}\label{bound_BCGT3}
	 \sup_{\boldsymbol{t}\in [0,1]^2 } \left|\frac{1}{N} \sum_{j=1}^N 1_{[\boldsymbol{0}, \boldsymbol{t}] \cap \Omega}(\bsx_j \pmod{1} )  - \lambda([\boldsymbol{0}, \boldsymbol{t}] \cap \Omega) \right| \le  c'_s N^{-2/3} \log N.
	\end{align}
	
	
	In order to be able apply this result in our setting, it remains to construct a suitable convex set $\Omega$ in $[0,1]^2$ which has the graph of $\psi L^{-1}$ as part of its boundary. We define the boundary of the set $\Omega$ by extending the graph of $\psi L^{-1}$ using a B\'{e}zier curve such that the curve is twice continuously differentiable. The B\'{e}zier curve can be constructed using the derivative information of $\psi L^{-1}$ at the boundary and further control points to control the curvature of the curve. The set $\Omega$ enclosed by this curve then satisfies the assumptions that its boundary is twice continuously differentiable with non-vanishing curvature. The details of the construction are left to the reader (see \cite[Chapter~6]{Farin2001}).
	
	With these settings, the desired discrepancy bound in Theorem~\ref{Th:2thirds} now follows from \eqref{bound_BCGT3}.

	\section{A quality criterion for driver sequences}\label{sec_quality}
	
	In acceptance-rejection sampling, the choice of driver sequence can have a significant  impact on the properties of the accepted samples. In this section, we will present a criterion which can be used to  measure the quality of driver sequences.

	Let $\boldsymbol{n}=(n_1,n_2)\in \mathbb{Z}^2$  and let $|\boldsymbol{n}|=\max\{|n_1|,|n_2|\}$.
	For a collection of points $T_M=\{\boldsymbol{x}_j \mid j=1,\ldots, M\}$ and $R>0$, define the following quantity $\mathcal{Q}_R$ with respect to the point set $T_M$,
	\begin{eqnarray}\label{CriterionR}
	\mathcal{Q}_R(T_M)= \frac{1}{R}+\sum_{\substack{0<|\boldsymbol{n}|<R\\ \boldsymbol{n}\in \mathbb{Z}^2\backslash \boldsymbol{0}}}
	\Big(\frac{1}{|\boldsymbol{n}|^{3/2}}+\frac{1}{(1+|n_1|)(1+|n_2|)}\Big) 
	\cdot \Big| \frac{1}{M}\sum_{j=1}^M e^{2\pi \mathrm{i} \boldsymbol{n} \cdot \bsx_j}\Big|.
	\end{eqnarray}
	
	The general Erd\H{o}s-Tur\'{a}n inequality \cite[Theorem~3]{BCGT2015}  provides an upper bound on the discrepancy.
	We restate  this result as a proposition in the following.
	
	\begin{proposition}\label{Prop:ETineq}
		There exists a positive function $\phi(u)$ on $[0,\infty)$ with rapid decay at infinity such that for every collection of points $\{\bsx_j \mid j=1,\ldots, M \} \subset \mathbb{R}^s$,
		for every bounded Borel set $D \subset  \mathbb{R}^s$, and for every $R>0$,
		
		\begin{eqnarray*}
			&&\Big |\frac{1}{M}\sum_{j=1}^{M}  \sum_{\boldsymbol{m}\in \mathbb{Z}^2} 1_{D} (\bsx_j +\boldsymbol{m})-\lambda(D)\Big|\\
			&\le &|\hat{H}_R(0)|+\sum_{\substack{ \boldsymbol{n}\in \mathbb{Z}^2 \\0<|\boldsymbol{n}|<R }}(|\hat{1}_D(\boldsymbol{n})|+ |\hat{H}_R(n)|) \Big| \frac{1}{N}\sum_{j=1}^M e^{2\pi \mathrm{i} \boldsymbol{n} \cdot \bsx_j}\Big|,
		\end{eqnarray*}
		where $H_R(x)=\phi(R~\mathrm{dist} (x, \partial D))$, where $\mathrm{dist}(x, \partial D)$ is the Euclidean distance between $x$ and the boundary $\partial D$ of $D$, $\hat{H}_R(0)$ is the zeroth Fourier coefficient of $H_R$ and $\hat{1}_{D}$ is the Fourier transform of the indicator function $1_{D}$ of $D$.
	\end{proposition}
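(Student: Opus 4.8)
The plan is to run the classical sandwich argument underlying Erd\H{o}s--Tur\'an--Koksma type estimates, with the frequency cut-off at $R$ supplied by a band-limited mollifier and the boundary error absorbed into $H_R$. Write $\widetilde{1_D}(x)=\sum_{\boldsymbol{m}\in\mathbb{Z}^s}1_D(x+\boldsymbol{m})$ for the $\mathbb{Z}^s$-periodization of $1_D$; then the quantity on the left of the asserted inequality is $\frac1M\sum_{j=1}^M\widetilde{1_D}(\bsx_j)-\int_{[0,1]^s}\widetilde{1_D}$ and $\int_{[0,1]^s}\widetilde{1_D}=\lambda(D)$. Since $1_D$ has no usable Fourier decay, I would not expand it directly, but trap it between two smooth, band-limited functions built from $1_D$, $H_R$, and a fixed kernel.

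First I would fix, once and for all (independently of $D$, $R$ and the nodes), a nonnegative kernel $\chi$ on $\mathbb{R}^s$ with $\int_{\mathbb{R}^s}\chi=1$, having rapid decay and with $\widehat\chi$ supported in the unit ball --- for instance obtained from high powers of a Fej\'er-type kernel. Setting $\chi_R(x)=R^s\chi(Rx)$ gives $\widehat{\chi_R}(\boldsymbol{n})=\widehat\chi(\boldsymbol{n}/R)$, which vanishes once $|\boldsymbol{n}|\ge R$ and obeys $|\widehat{\chi_R}(\boldsymbol{n})|\le\widehat{\chi_R}(0)=1$. I would then pick the profile $\phi$ (hence $H_R(x)=\phi(R\,\mathrm{dist}(x,\partial D))$) to be a fixed positive, rapidly decaying function --- large near $0$ and with tails calibrated against those of $\chi$ --- so that
\begin{equation*}
\Psi_R^{+}=(1_D+H_R)\ast\chi_R,\qquad \Psi_R^{-}=(1_D-H_R)\ast\chi_R
\end{equation*}
satisfy $\Psi_R^{-}\le 1_D\le\Psi_R^{+}$ pointwise on $\mathbb{R}^s$. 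This design is exactly what makes the three crucial facts hold: $\widehat{\Psi_R^{\pm}}(\boldsymbol{n})=(\widehat{1_D}(\boldsymbol{n})\pm\widehat{H_R}(\boldsymbol{n}))\widehat{\chi_R}(\boldsymbol{n})$ vanishes for $|\boldsymbol{n}|\ge R$; one has $|\widehat{\Psi_R^{\pm}}(\boldsymbol{n})|\le|\widehat{1_D}(\boldsymbol{n})|+|\widehat{H_R}(\boldsymbol{n})|$; and the zeroth coefficient is $\widehat{\Psi_R^{\pm}}(0)=\lambda(D)\pm\widehat{H_R}(0)$ with $\widehat{H_R}(0)=\int_{\mathbb{R}^s}H_R\ge 0$.

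Next I would periodize and test against the nodes. Since $1_D$ and $H_R$ are integrable ($D$ is bounded, $\phi$ decays rapidly while $\mathrm{dist}(x,\partial D)\to\infty$ as $|x|\to\infty$), $\Psi_R^{\pm}$ are band-limited $L^1$ functions and Poisson summation gives $\widetilde{\Psi_R^{\pm}}(x)=\sum_{|\boldsymbol{n}|<R}\widehat{\Psi_R^{\pm}}(\boldsymbol{n})e^{2\pi\mathrm{i}\boldsymbol{n}\cdot x}$, a finite sum. From $1_D\le\Psi_R^{+}$ (hence $\widetilde{1_D}\le\widetilde{\Psi_R^{+}}$ pointwise), isolating the $\boldsymbol{n}=0$ term and using $|\widehat{\Psi_R^{+}}(\boldsymbol{n})|\le|\widehat{1_D}(\boldsymbol{n})|+|\widehat{H_R}(\boldsymbol{n})|$ together with $|\frac1M\sum_j e^{2\pi\mathrm{i}\boldsymbol{n}\cdot\bsx_j}|\le 1$, one gets
\begin{equation*}
\frac1M\sum_{j=1}^M\widetilde{1_D}(\bsx_j)-\lambda(D)\le|\widehat{H_R}(0)|+\sum_{0<|\boldsymbol{n}|<R}\bigl(|\widehat{1_D}(\boldsymbol{n})|+|\widehat{H_R}(\boldsymbol{n})|\bigr)\Bigl|\frac1M\sum_{j=1}^M e^{2\pi\mathrm{i}\boldsymbol{n}\cdot\bsx_j}\Bigr|.
\end{equation*}
The same computation with $\Psi_R^{-}\le 1_D$ produces the matching lower bound, and the two together give the claimed inequality.

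The step I expect to be the obstacle is the construction itself: exhibiting one $D$-independent pair $(\chi,\phi)$ for which the sandwich $\Psi_R^{-}\le 1_D\le\Psi_R^{+}$ genuinely holds for every bounded Borel $D$ and every $R>0$. Concretely one must control the mass of $\chi_R$ lying at distance $\ge d$ from the origin and check that $(H_R\ast\chi_R)(x)$ is at least that large uniformly in $x$ and $R$; this forces $\phi$ to be bounded below by an absolute constant near $0$ and to have tails comparable to (rather than faster than) those of $\chi$ --- a calibration that is unavoidable precisely because a band-limited kernel cannot be compactly supported. The remaining ingredients --- the Poisson/Fourier bookkeeping, extracting the zero frequency, and the triangle inequality --- are routine; this reproduces \cite[Theorem~3]{BCGT2015}.
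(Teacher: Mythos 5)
The paper offers no proof of this proposition at all—it is restated verbatim from \cite[Theorem~3]{BCGT2015}—and your reconstruction is precisely the standard argument behind that theorem: trap $1_D$ between $(1_D\pm H_R)\ast\chi_R$ with a nonnegative, rapidly decaying, band-limited kernel $\chi_R$, apply Poisson summation to the band-limited majorant/minorant, isolate the zero frequency, and bound $|\widehat{\chi_R}(\boldsymbol{n})|\le 1$; so your route coincides with the source the paper cites. The one step you leave open, the calibration of $\phi$ against $\chi$, does go through exactly as you indicate: both sandwich inequalities reduce to $(H_R\ast\chi_R)(x)\ge\tau\bigl(R\,\mathrm{dist}(x,\partial D)\bigr)$ for all $x$, where $\tau(t)=\int_{|z|\ge t}\chi(z)\,dz$, and choosing $\phi(u)=C\,\tau(u/2)$ with $C$ depending only on $\chi$ works uniformly in $x$, $R$ and $D$, because $\mathrm{dist}(\cdot,\partial D)$ is $1$-Lipschitz (so $R\,\mathrm{dist}(y,\partial D)\le 2R\,\mathrm{dist}(x,\partial D)$ on the ball $|x-y|\le\mathrm{dist}(x,\partial D)$, respectively $|x-y|\le t_0/R$ near the boundary) and that ball carries a $\chi_R$-mass bounded below by an absolute constant; such a $\phi$ is positive, rapidly decaying, and independent of $D$, $R$ and the nodes, so your proof is complete once this short verification is written out.
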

	Under certain smoothness conditions on the boundary curve of $D$, the quality criterion $\mathcal{Q}_R$  can be derived from the right-hand side of the Erd\H{o}s-Tur\'{a}n inequality by working out the corresponding Fourier coefficient decay. More precisely, if $D$ is the intersection of a convex set $\Omega$ with a rectangle such that the boundary curve of $\Omega$  is twice continuously differentiable and having non-vanishing curvature, then we have the formula for $\mathcal{Q}_R$  as shown in Equation~\eqref{CriterionR}. This was shown in \cite[Lemma~10 \& Lemma~11]{BCGT2015}. We use this fact in the proof of Theorem~\ref{Lem:2Dis} below.

	The following theorem shows  a connection between the criterion $\mathcal{Q}_R$ for the driver sequence and the star-discrepancy of  the samples obtained by the acceptance-rejection algorithm using a deterministic  driver sequence.  In the following discussion, the notation $ x_N \lesssim y _N$ means that there exists a positive constant $\theta$ such that $x_N\le \theta y_N$ for all $N$.
	
	\begin{theorem}\label{Lem:2Dis}
		Let the  unnormalized density function $\psi:[0,1]\to \mathbb{R}_+$  be twice continuously
		differentiable and having non-vanishing curvature. Assume that there exists a constant $L<\infty$ such that $\psi(x) \leq L$ for  all $x\in[0,1]$. Let $Q_N (T_M; \psi)=\{ y_j \mid j = 1,\ldots, N \} \subset [0,1]$ be generated by the acceptance-rejection sampler using a point set $T_M = \{\bsx_j \mid j = 1, \ldots, M \}$ of cardinality $M$ as the driver sequence. Then we have
		\begin{equation*}
		D_{N,\psi}^*(Q_N(T_M; \psi)) \lesssim \mathcal{Q}_R(T_M).
		\end{equation*}
	\end{theorem}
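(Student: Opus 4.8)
The plan is to reduce the star-discrepancy of the projected accepted samples $Q_N(T_M;\psi)$ to a discrepancy-type quantity for the driver sequence $T_M$ relative to a planar region, and then invoke the Erd\H{o}s--Tur\'an machinery (Proposition~\ref{Prop:ETineq}) together with the Fourier-coefficient estimates of \cite{BCGT2015} that produce exactly the weights appearing in $\mathcal{Q}_R$. Concretely, for a test box $[0,t)\subset[0,1]$ in the $(s-1)=1$ dimensional target space, observe that a point $\bsx_j=(x_{j,1},x_{j,2})\in T_M$ is accepted and its projection lands in $[0,t)$ precisely when $\bsx_j\in A_t:=\{(x_1,x_2)\in[0,1]^2 : x_1\in[0,t),\ L x_2\le \psi(x_1)\}$, i.e. $A_t=A\cap([0,t)\times[0,1])$ with $A$ the region under the scaled graph $\psi L^{-1}$. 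Thus $\frac1N\sum_{j=1}^N 1_{[0,t)}(y_j)=\frac{1}{N}\sum_{j=1}^M 1_{A_t}(\bsx_j)$ and $\frac1C\int_0^t\psi=\frac{\lambda(A_t)}{\lambda(A)}$, while $N/M\to \lambda(A)$ in the sense that $|\frac NM-\lambda(A)|=|\frac1M\sum 1_A(\bsx_j)-\lambda(A)|$ is itself controlled by the same discrepancy. So I would write
\begin{align*}
\frac1N\sum_{j=1}^N 1_{[0,t)}(y_j)-\frac1C\int_0^t\psi
&= \frac{1}{N}\Big(\sum_{j=1}^M 1_{A_t}(\bsx_j)-\lambda(A_t)\Big)
 - \frac{\lambda(A_t)}{N}\Big(\sum_{j=1}^M 1_{A}(\bsx_j)-\lambda(A)\Big)\cdot\frac{1}{\lambda(A)},
\end{align*}
and bound each bracket by $M$ times the discrepancy of $T_M$ with respect to the relevant region, using $N\gtrsim M$ and $\lambda(A_t)\le\lambda(A)$.

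Next I would identify the regions $A_t$ and $A$ as intersections of a fixed convex set $\Omega$ (the one constructed in Section~\ref{sec:PT1} by closing off the graph of $\psi L^{-1}$ with a twice-differentiable B\'ezier arc of non-vanishing curvature) with an axis-parallel rectangle $[0,t)\times[0,1]$. This is exactly the geometric situation covered by \cite[Lemma~10 \& Lemma~11]{BCGT2015}: the Fourier transform $\hat 1_{\Omega\cap\text{rect}}(\boldsymbol n)$ decays like $|\boldsymbol n|^{-3/2}$ from the curved boundary piece and like $(1+|n_1|)^{-1}(1+|n_2|)^{-1}$ from the straight edges, and the smoothing term $\hat H_R$ contributes comparable weights plus the $1/R$ term. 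Feeding these decay estimates into Proposition~\ref{Prop:ETineq} gives, uniformly in $t$ (and uniformly over the finitely many translates/rectangles involved),
\begin{equation*}
\Big|\frac1M\sum_{j=1}^M 1_{A_t}(\bsx_j)-\lambda(A_t)\Big|\ \lesssim\ \mathcal{Q}_R(T_M),
\end{equation*}
and likewise for $A$ in place of $A_t$. Combining with the algebraic identity above and $N\asymp M$ yields $D_{N,\psi}^*(Q_N(T_M;\psi))\lesssim \mathcal{Q}_R(T_M)$, as claimed; here the implied constant absorbs $L$, $\lambda(A)^{-1}$, and the curvature bounds of $\partial\Omega$.

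The main obstacle is the lower bound $N\gtrsim M$: the statement implicitly needs $N$ to be comparable to $M$ (otherwise dividing by $N$ instead of $M$ is not harmless, and indeed $\mathcal{Q}_R$ is phrased with $M$ in the exponential sums). Since $N=\#(A\cap T_M)$ and $\lambda(A)=C/L>0$ is a fixed positive constant, one has $N=M\lambda(A)+O(M\,\mathcal{Q}_R(T_M))$ from the $A$-discrepancy estimate, so for $M$ large enough $N\ge \tfrac12\lambda(A)M$; the small-$M$ (equivalently small-$N$) cases are absorbed into the constant since $D^*_{N,\psi}\le 1$ always. A secondary technical point is ensuring the Fourier-decay estimates from \cite{BCGT2015} apply uniformly as the rectangle $[0,t)\times[0,1]$ varies with $t$ and as one passes to the relevant pieces $K_r$ of the periodized region; this is handled exactly as in Section~\ref{sec:PT1}, because the constant in \eqref{bound_BCGT3} depends only on the curvature and boundary length of $\Omega$ and on $\alpha,\beta$, not on $t$. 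Everything else is the routine bookkeeping of collecting the weights $|\boldsymbol n|^{-3/2}+((1+|n_1|)(1+|n_2|))^{-1}$ and the $1/R$ term into the definition \eqref{CriterionR} of $\mathcal{Q}_R$.
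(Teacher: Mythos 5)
Your proposal is correct and follows essentially the same route as the paper's proof: the same splitting of the error into the driver-sequence discrepancy with respect to $J_t^*=A\cap([0,t)\times[0,1))$ and with respect to $A$ itself (using $N=\sum_j 1_A(\bsx_j)$ and $\lambda(J_t^*)\le\lambda(A)$), followed by the Erd\H{o}s--Tur\'an inequality of Proposition~\ref{Prop:ETineq} and the Fourier-decay estimates of \cite[Lemma~10 \& Lemma~11]{BCGT2015} applied to the intersection of the convex set $\Omega$ with a rectangle. Your explicit treatment of the factor $M/N$ (via $N/M\to\lambda(A)>0$, with small $\mathcal{Q}_R$ forcing $N\gtrsim M$ and large $\mathcal{Q}_R$ making the bound trivial) is a point the paper leaves implicit, and is a welcome addition.
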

	
	\begin{proof} 
		Let $$A=\{\boldsymbol{x}=(x_1,x_2)\in[0,1]^2 \mid \psi(x_1)\ge Lx_2\}$$ and $$J_{t}^*=([0,t)\times [0,1))\bigcap A $$ for $t\in [0,1]$.
		
		Algorithm~\ref{DAR} implies that the points $y_1, \ldots, y_N$ are the first coordinates of the points of the driver sequence $\bsx_1, \ldots, \bsx_M$ which are in the set $A$. Hence we have
		\begin{equation*}
		\sum_{j=1}^{M}1_{J_{t}^*}(\boldsymbol{x}_j)
		=\sum_{j=1}^{N}1_{[0,t)}(y_j).
		\end{equation*} 
		Note that $C= \int_0^1 \psi(z) \rd z = L \lambda(A)$ and for any $t \in [0,1]$ we have $\int_0^t \psi(z) \rd z= L \lambda(J_t^*)$.
		Therefore,
		\begin{eqnarray}\label{Discrep-QR}
		&&\Big|\frac{1}{N}\sum_{j=1}^{N}1_{[0,t)}(y_j)
		-\frac{1}{C}\int_{[0,t)}\psi(z)\rd z\Big| \nonumber \\
		&=&\Big|\frac{1}{N}\sum_{j=1}^{M}1_{J_t^*}({\boldsymbol{x}_j})
		- \frac{1}{\lambda(A)} \lambda(J_t^*) \Big| \nonumber \\
		& \le & \frac{M}{N} \Big|\frac{1}{M} \sum_{j=1}^{M} 1_{J_t^*}(\bsx_j) - \lambda(J_t^*) \Big|  + 
		\Big|\lambda(J_t^*) \Big(\frac{M}{N} - \frac{1}{\lambda(A)}\Big)\Big| \nonumber\\
		& \le& \frac{M}{N} \Big(\Big|\frac{1}{M}\sum_{j=1}^{M} 1_{J_t^*}(\bsx_j) - \lambda(J_{t}^*)\Big|  + \Big| \lambda(A) \Big(1- \frac{1}{\lambda(A)} \frac{N}{M}\Big)\Big| \Big) \nonumber\\
		& \le& \frac{M}{N} \Big(\Big|\frac{1}{M}\sum_{j=1}^{M} 1_{J_t^*}(\bsx_j) - \lambda(J_{t}^*)\Big| + \Big| \lambda(A) -  \frac{1}{M}\sum_{j=1}^{M} 1_{J_A}(\bsx_j)\Big| \Big) \nonumber\\
		&\le& \frac{2M}{N}\sup_{t\in [0,1]}\left|\frac{1}{M}\sum_{j=1}^{M} 1_{J_t^*}(\bsx_j) - \lambda(J_{t}^*) \right|,
		\end{eqnarray}
		where we used the estimation $\lambda(J^*_t) \le\lambda(A)$ and the fact that
		$N = \sum_{j=1}^{M} 1_{A}(\bsx_j)$ is the number of accepted points.

		For the Borel set $J_t^*\subseteq[0,1)^2$ we have
		\[\frac{1}{M}\sum_{j=1}^M \sum_{\boldsymbol{m} \in \mathbb{Z}^2} 1_{J_t^*} (\bsx_j + \boldsymbol{m}) = \frac{1}{M}\sum_{j=1}^M 1_{J_t^*}(\boldsymbol{x}_j\pmod{1}).\]
		
		Then by the general Erd\H{o}s-Tur\'{a}n inequality in Proposition~\ref{Prop:ETineq}, we obtain, for every $R>0$,
		\begin{eqnarray*}
			\Big|\frac{1}{M}\sum_{j=1}^{M}1_{J_t^*}(\bsx_j)
			-\lambda(J_{t}^*)\Big| \le |\hat{H}_R(0)|
			+\sum_{\substack{ \boldsymbol{n}\in \mathbb{Z}^2\\0<|\boldsymbol{n}|<R}}(|\hat{1}_{J_t^*}(\boldsymbol{n})|+ |\hat{H}_R(\boldsymbol{n})|) \Big| \frac{1}{M}\sum_{j=1}^M e^{2\pi \mathrm{i} \boldsymbol{n} \cdot \bsx_j}\Big|.
		\end{eqnarray*}

		Note that $J_t^*$ is the intersection of the convex set $\Omega$  whose boundary was constructed using a  B\'{e}zier curve  which  is twice continuously differentiable with non-vanishing curvature (see the proof of Theorem~\ref{Th:2thirds}), and the rectangle $[0,t)\times [0,1)$. Thus we can use the following estimations from   \cite[Lemma~10 \& Lemma~11]{BCGT2015},
		\begin{eqnarray*}
			|\hat{H}_R(0)|&\lesssim &\frac{1}{R},\\
			|\hat{H}_R(\boldsymbol{n})| &\lesssim & \frac{1}{|\boldsymbol{n}|^{3/2}}+ \frac{1}{(1+|n_1|)(1+|n_2|)},\\
			|\hat{1}_{J_t^*}(\boldsymbol{n})|&\lesssim& \frac{1}{|\boldsymbol{n}|^{3/2}}+ \frac{1}{(1+|n_1|)(1+|n_2|)},\\
		\end{eqnarray*}
		the result now follows.
	\end{proof}

	\section{Proof of Theorem~\ref{Th:lattice}}\label{sec:PT2}
	
	We prove the following lemma which, together with Theorem~\ref{Lem:2Dis}, implies Theorem~\ref{Th:lattice}.
	
	\begin{lemma}\label{Bound:QR} Let $F_k$ denote the $k$-th Fibonacci number.
		Let $$\mathcal{F}_k= \Big\{\bsx_j=\big(\frac{j}{F_k}, \big\{\frac{jF_{k-1}}{F_k}\big\} \big) \mid j=1,\ldots, F_k\Big\}.$$ Then we have
		\begin{equation*}
		\mathcal{Q}_R (\mathcal{F}_k) \lesssim F_{k}^{-2/3},
		\end{equation*}  
		for $R=F_{\lceil 2k/3\rceil}$. The implied  constant is independent of $F_k$.
	\end{lemma}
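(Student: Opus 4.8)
The plan is to use the lattice structure of $\mathcal{F}_k$ to turn the weighted exponential sum defining $\mathcal{Q}_R$ into a sum over the dual lattice, and then to control that sum via the well-known optimality of the Fibonacci lattice together with elementary geometry of numbers. First I would record the exact evaluation of the exponential sums: for $\boldsymbol{n}\in\mathbb{Z}^2$ one has $e^{2\pi\mathrm{i}\boldsymbol{n}\cdot\bsx_j}=e^{2\pi\mathrm{i} j(n_1+n_2F_{k-1})/F_k}$, since the fractional part in the second coordinate only shifts the exponent by an integer, so that
\[\frac1{F_k}\sum_{j=1}^{F_k}e^{2\pi\mathrm{i}\boldsymbol{n}\cdot\bsx_j}=\begin{cases}1,&n_1+n_2F_{k-1}\equiv 0\pmod{F_k},\\0,&\text{otherwise.}\end{cases}\]
Writing $D_k=\{\boldsymbol{n}\in\mathbb{Z}^2:\ n_1+n_2F_{k-1}\equiv0\pmod{F_k}\}$ for the dual lattice, a lattice of determinant $F_k$, we obtain
\[\mathcal{Q}_R(\mathcal{F}_k)=\frac1R+\sum_{\substack{\boldsymbol{n}\in D_k\setminus\{\boldsymbol{0}\}\\ 0<|\boldsymbol{n}|<R}}\Big(\frac1{|\boldsymbol{n}|^{3/2}}+\frac1{(1+|n_1|)(1+|n_2|)}\Big)=:\frac1R+S_1+S_2.\]
That these sums are finite and the exponential sums vanish identically off $D_k$ is exactly what will let us avoid the extra $\log$ factor present in Theorem~\ref{Th:2thirds}.

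The one nontrivial external ingredient I would invoke is the optimal figure of merit of the Fibonacci lattice: there is an absolute constant $c>0$ such that $\max(1,|n_1|)\max(1,|n_2|)\ge cF_k$ for every $\boldsymbol{n}\in D_k\setminus\{\boldsymbol{0}\}$. This is classical and reflects that $F_{k-1}/F_k$ has continued-fraction expansion with all partial quotients equal to $1$; short dual vectors are exhibited by $((-1)^jF_{k-j},F_j)\in D_k$, which lie in $D_k$ by d'Ocagne's identity $F_{k-1}F_j-F_kF_{j-1}=(-1)^{j-1}F_{k-j}$. I would then extract two consequences. First, every nonzero $\boldsymbol{n}\in D_k$ has $|\boldsymbol{n}|\ge\sqrt{cF_k}$ (off the axes, take the square root; on the axes $|\boldsymbol{n}|\ge cF_k$ because $\gcd(F_{k-1},F_k)=1$), so both successive minima of $D_k$ are of order $\sqrt{F_k}$ and hence $\#\{\boldsymbol{n}\in D_k:\ |\boldsymbol{n}|<\rho\}\lesssim\rho^2/F_k$ for every $\rho\gtrsim\sqrt{F_k}$. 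Second, since $R=F_{\lceil2k/3\rceil}<F_k$, no term of $S_1$ or $S_2$ is a point on a coordinate axis, so each such $\boldsymbol{n}$ satisfies $|n_1|,|n_2|\ge1$ and $|n_1||n_2|\ge cF_k$. Finally, from $F_m\asymp\varphi^m$ we have $R=F_{\lceil2k/3\rceil}\asymp F_k^{2/3}$, and $R\gtrsim\sqrt{F_k}$ for large $k$.

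For $S_2$ I would bound each summand by $1/(|n_1||n_2|)\le1/(cF_k)$ and multiply by the point count $\lesssim R^2/F_k$, so $S_2\lesssim R^2/F_k^2\asymp F_k^{-2/3}$. A crude bound does not suffice for $S_1$, so I would split $\{0<|\boldsymbol{n}|<R\}$ into dyadic annuli $2^\ell\le|\boldsymbol{n}|<2^{\ell+1}$: by the shortest-vector bound only annuli with $2^\ell\gtrsim\sqrt{F_k}$ are nonempty, each such annulus contains $\lesssim2^{2\ell}/F_k$ points of $D_k$, and each point contributes at most $2^{-3\ell/2}$. Summing the (geometric) series over $\sqrt{F_k}\lesssim2^\ell<R$ gives $S_1\lesssim F_k^{-1}\sum 2^{\ell/2}\lesssim R^{1/2}/F_k\asymp F_k^{-2/3}$. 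Combining this with $1/R\asymp F_k^{-2/3}$ and adding the three contributions yields $\mathcal{Q}_R(\mathcal{F}_k)\lesssim F_k^{-2/3}$.

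I expect the only non-routine point to be the figure-of-merit bound $\max(1,|n_1|)\max(1,|n_2|)\gtrsim F_k$ with an absolute implied constant — this is where one either cites the standard theory of two-dimensional lattice rules or gives a short self-contained argument from the continued fraction of $F_{k-1}/F_k$; everything else is geometry-of-numbers counting plus the elementary asymptotics of Fibonacci numbers. A secondary care point is making the dyadic estimate of $S_1$ rigorous at the bottom scale, i.e.\ checking that annuli of radius $\ll\sqrt{F_k}$ are genuinely empty, so that the otherwise divergent contribution of the constant $+1$ in lattice-point counts never arises.
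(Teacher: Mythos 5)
Your proposal is correct, and it reaches the bound by a genuinely different route than the paper. Both arguments start identically, reducing $\mathcal{Q}_R(\mathcal{F}_k)$ to a sum over the dual lattice $D_k=\{\boldsymbol{n}: n_1+n_2F_{k-1}\equiv 0 \pmod{F_k}\}$, and both ultimately rest on the same arithmetic input, namely the best-approximation property of $F_{k-1}/F_k$ (all partial quotients equal to $1$). But the paper exploits this in a hands-on way: it partitions the range of $|n_1|$ and $|n_2|$ into Fibonacci intervals $[F_i,F_{i+1})\times[F_m,F_{m+1})$, counts dual-lattice points in each such box via the star-discrepancy of $\mathcal{F}_k$ itself (giving $\lesssim \log F_k + F_{i-1}F_{m-1}/F_k$ points per box), uses the continued-fraction bound $|n_1|\ge F_{k-i}$ to restrict the index range, and then evaluates four explicit double geometric sums in $\varphi$. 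You instead package the arithmetic input as the figure-of-merit bound $\max(1,|n_1|)\max(1,|n_2|)\gtrsim F_k$ on $D_k\setminus\{\boldsymbol{0}\}$ (classical: Zaremba's bound $\rho\ge N/(K+2)$ with $K=1$, equivalently $|n_2|\,\|n_2F_{k-1}/F_k\|\ge 1/3$, which is the same fact the paper cites from Niederreiter's Appendix~B in the guise of $|n_1|\ge F_{k-i}$), deduce a shortest-vector bound $\lambda_1\gtrsim\sqrt{F_k}$, and then count points in dyadic annuli by a packing argument, $\#\{\boldsymbol{n}\in D_k:|\boldsymbol{n}|<\rho\}\lesssim\rho^2/F_k$ for $\rho\gtrsim\sqrt{F_k}$. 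This buys a shorter and more conceptual estimate of the $|\boldsymbol{n}|^{-3/2}$ sum ($\lesssim R^{1/2}/F_k\asymp F_k^{-2/3}$, with the bottom-scale emptiness correctly flagged so no stray $+1$ terms appear), at the cost of having to import or prove the figure-of-merit inequality, which you rightly identify as the one non-routine ingredient; your treatment of the product-weight sum is cruder than the paper's ($\lesssim R^2/F_k^2\asymp F_k^{-2/3}$ versus the paper's $(\log F_k)^2/F_k$ via Niederreiter's Theorem~5.17), but this is immaterial since $F_k^{-2/3}$ is the target. The handling of axis vectors (excluded because $R<F_k$ and $\gcd(F_{k-1},F_k)=1$), the choice $R=F_{\lceil 2k/3\rceil}\asymp F_k^{2/3}$, and the final assembly all match the paper's conclusions, so the argument is complete once the classical figure-of-merit bound is cited or derived.
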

	
	\begin{proof}
		The quantity $\mathcal{Q}_R$ was defined in \eqref{CriterionR}.  We first consider the right-most sum in the definition of $\mathcal{Q}_R$, for which we have
		\begin{eqnarray*}
			\Big|\frac{1}{F_k}\sum\limits_{j=1}^{F_k} e^{2\pi \mathrm{i} \boldsymbol{n} \cdot \big(\frac{j}{F_k},\frac{jF_{k-1}}{F_k}\big)}\Big| &=&\Big|\frac{1}{F_k}\sum\limits_{j=1}^{F_k} \big(e^{2\pi \mathrm{i} \boldsymbol{n} \cdot (1,F_{k-1})/F_k}\big)^j\Big|\\ &=&\left\{\begin{array}{ll}
				1, & \mbox{if~} F_k|(n_1+n_2F_{k-1}),\\[0.25cm]
				0, &\mbox{if~} F_k\nmid(n_1+n_2F_{k-1}),\\[0.25cm]
			\end{array}\right.
		\end{eqnarray*}
		where  $F_k|(n_1+n_2F_{k-1})$ means that  $F_k$ divides $(n_1+n_2F_{k-1})$. Hence
		\begin{eqnarray}\label{SumQR}
		&&\sum\limits_{\substack{\boldsymbol{n}\in \mathbb{Z}^2\\0<|\boldsymbol{n}|<R}} \Big(\frac{1}{|\boldsymbol{n}|^{3/2}}+\frac{1}{(1+|n_1|)(1+|n_2|)}\Big)
	 \cdot \Big|\frac{1}{F_k}\sum\limits_{j=1}^{F_k} e^{2\pi \mathrm{i} \boldsymbol{n} \cdot \big(\frac{j}{F_k},\frac{jF_{k-1}}{F_k}\big)}\Big| \nonumber \\
		=&&\sum\limits_{\substack{\boldsymbol{n}\in \mathbb{Z}^2\\ 0<|\boldsymbol{n}|<R\\   F_k|(n_1+n_2F_{k-1})}} \Big(\frac{1}{|\boldsymbol{n}|^{3/2}}+\frac{1}{(1+|n_1|)(1+|n_2|)}\Big). \nonumber
		\end{eqnarray}

		From \cite[Definition~5.4 \& Equation~(5.11) \& Theorem~5.17]{Niederreiter1992} we obtain that
		\begin{align*}
		\sum_{\boldsymbol{n} } \frac{1}{\max\{1,|n_1|\}\max\{1,|n_2|\}}  \lesssim & \frac{(\log F_k)^2}{F_k},
		\end{align*}
		where the sum is over all $\boldsymbol{n} = (n_1, n_2) \neq (0, 0)$ with $n_1 + n_2 F_{k-1} \equiv 0 \pmod{F_k}$ and $ - F_k/2 < n_i \le F_k/2$ for $i = 1, 2$. Hence
		\begin{eqnarray*}
			\sum\limits_{\substack{0<|\boldsymbol{n}|<R\\ F_k|(n_1+n_2F_{k-1})}} \frac{1}{(1+|n_1|)(1+|n_2|)}
			&\le& \sum\limits_{\substack{0<|\boldsymbol{n}|<R\\F_k|(n_1+n_2F_{k-1})}} \frac{1}{\max\{1,|n_1|\}\max\{1,|n_2|\}}\\
			 & \lesssim &  \frac{(\log F_k)^2}{F_k}.
		\end{eqnarray*}
		
		Note that if $n_1=0$, then $F_k|n_2F_{k-1}$ which implies $F_k|n_2$ since $\gcd(F_k,F_{k-1})=1$. It further
		implies $n_2=0$ by realising that $|n_2|\le R =F_{\lceil 2k/3\rceil} < F_k$ for $k \ge 3$. 
		
		If $n_2=0$, then $F_k | n_1$, which implies that $n_1 = 0$ since $|n_1| \le R = F_{\lceil 2k/3 \rceil} < F_k$ for $k \ge 3$.  
		
		Since $F_k | (n_1 + n_2 F_{k-1})$, there is an $\ell \in \mathbb{Z}$ such that $n_1+n_2F_{k-1}=\ell F_k$. 
		For given $n_2$, there is at most one value $\ell\in \mathbb{Z}$ such that $-R<n_1=\ell F_k-n_2F_{k-1}< R$.
		
		Now we estimate the remaining term of Equation~\eqref{SumQR}. We have
		\begin{eqnarray}\label{termN}
		\sum\limits_{\substack{0<|\boldsymbol{n}|<R\\ F_k|(n_1+n_2F_{k-1})}} \frac{1}{|\boldsymbol{n}|^{3/2}}
		=\sum\limits_{\substack{-R<n_2<R\\ n_2\neq 0}}\sum\limits_{\substack{\ell\in \mathbb{Z}\\ -R<\ell F_k-n_2F_{k-1}<R}}
		\frac{1}{\max\{|n_2|,|\ell F_k-n_2F_{k-1}|\}^{\frac{3}{2}}}.
		\end{eqnarray}
		
		To bound this term we need some preliminary results on Fibonacci lattice point sets $\mathcal{F}_k$. The star-discrepancy with respect to uniform distribution (given in Equation~\eqref{defStarDis} using $\psi=1$) of the Fibonacci point set $\mathcal{F}_k$ is bounded by 
		$$D_{F_k}^*(\mathcal{F}_k)\le c_0 \frac{\log F_k}{F_k},$$
		see  \cite[pp.~124]{Niederreiter1992}. The star-discrepancy  $D_{F_k}^*(\mathcal{F}_k)$ is defined with respect to   rectangles $[\boldsymbol{0},\boldsymbol{t}) = [0,t_1)\times[0,t_2)$ for all $(t_1,t_2)\in[0,1]^2$. To switch to the discrepancy $D_{F_k}(\mathcal{F}_k)$ with respect to arbitrary rectangles $[\boldsymbol{a},\boldsymbol{b}) \subseteq [0,1]^2$, we use the inequality $D_{F_k}(\mathcal{F}_k)\le 4D_{F_k}^*(\mathcal{F}_k)$, see \cite[Proposition~2.4]{Niederreiter1992}.

		Consider a rectangle $V$ of the following form,
		\begin{eqnarray*}
			V=\Big[ \frac{a}{F_k}, \frac{a+u}{F_k}\Big) \times \Big[ \frac{b}{F_k}, \frac{b+v}{F_k}\Big).
		\end{eqnarray*}
		By the definition of the star-discrepancy, it follows that
		\begin{eqnarray*}
			\Big| \frac{|V\cap \mathcal{F}_k|}{F_k} - \frac{uv}{F_k^2}\Big|	\le D_{F_k}(\mathcal{F}_k)\le 4D_{F_k}^*(\mathcal{F}_k)\le 4c_0 \frac{\log F_k}{F_k}.
		\end{eqnarray*}
		This implies that
		\begin{eqnarray}\label{FiboDiscrep}
		|V\cap \mathcal{F}_k| \le 4c_0 \log F_k + \frac{uv}{F_k}.
		\end{eqnarray}

		We now consider the double sum in Equation~\eqref{termN}. We divide the range of $0<|n_2|<F_{\lceil 2k/3\rceil}$ into
		\begin{equation*}
		F_i\le |n_2|< F_{i+1} \mbox{ for } i=2,3,\ldots, \big\lceil \frac{2k}{3}\big\rceil-1.
		\end{equation*}
		Let $a=F_i$ and $u=F_{i-1}$, then $a+u=F_{i+1}$.
		Similarly we divide the range of $0<|n_1|=|\ell F_k-n_2F_{k-1}|<F_{\lceil 2k/3\rceil}$
		into
		\begin{equation*}
		F_m\le|n_1| < F_{m+1} \mbox{ for } m = 2,3,\ldots, \big\lceil \frac{2k}{3}\big\rceil-1.
		\end{equation*}
		Let $b=F_m$ and $v=F_{m-1}$, then $b+v=F_{m+1}$. With those settings we have
		\begin{eqnarray*}
			\begin{array}{ll}
				&\displaystyle \frac{a}{F_k}\le \frac{|n_2|}{F_k} < \frac{a+u}{F_k},        \\[0.25cm]
				&\displaystyle \frac{b}{F_k} \le\big| \frac{n_2F_{k-1}}{F_k}-\ell\big| < \frac{b+v}{F_k}.     \\[0.25cm]
			\end{array}
		\end{eqnarray*}
		
		By Equation~\eqref{FiboDiscrep}, the number of Fibonacci points in the rectangle $V$,  given by $|\mathcal{F}_k\cap V|$,  is therefore bounded by $4c_0\log F_k+\frac{F_{i-1}F_{m-1}}{F_k}$. This is equivalent to
		the statement that the number of $(n_1,n_2)$ with $n_1=\ell F_k-n_2F_{k-1}$, $a=F_i\le |n_2|<F_{i+1}=a+u$, and  $b=F_m\le |n_1|<F_{m+1}=b+v$ is
		bounded above by a constant (which is independent of $i, k, m$) times
		
		\begin{equation}\label{NumBound}
		\quad 4c_0\log F_k+\frac{F_{i-1}F_{m-1}}{F_k}.
		\end{equation}
		
		This result can  be obtained by considering the following four cases,
		\begin{itemize}
			\item [(i)] $a\le n_2 < a+u$ and $b\le n_2F_{k-1}-\ell F_k<b+v$,
			\item [(ii)] $a\le -n_2 < a+u$ and $b\le -(n_2F_{k-1}-\ell F_k)<b+v$,
			\item [(iii)] $a\le n_2 < a+u$ and $b\le -(n_2F_{k-1}-\ell F_k)<b+v$,
			\item [(iv)] $a\le -n_2 < a+u$ and $b\le n_2F_{k-1}-\ell F_k<b+v$.
		\end{itemize}
		More precisely, for case (iii) and (iv), we consider the point set
		\begin{eqnarray*}
			\mathcal{F}'_k
			&=& \Big\{  \big(\frac{j}{F_k}, \big\{-\frac{jF_{k-1}}{F_k}\big\} \big) \mid j=1,\ldots F_k\Big\}\\
			&=& \Big\{  \big(\frac{j}{F_k}, 1-\big\{\frac{jF_{k-1}}{F_k}\big\} \big)\mid j=1,\ldots F_k\Big\}.
		\end{eqnarray*}
		Then the star-discrepancy of $\mathcal{F}'_k$, $D_{F_k}^*(\mathcal{F}'_k)\lesssim \frac{\log F_k}{F_k}$ by noting that  $\bsx_j'$ is a reflection
		of $\bsx_j \in \mathcal{F}_k$ and the inequalities $D_{F_k}^*(\mathcal{F}_k)\le D_{F_k}(\mathcal{F}'_k)\le 4D_{F_k}^*(\mathcal{F}'_k)$.
		
		On the other hand, for all $1\le n_2 < F_{i+1}$ and $k>i$, using the continued fractions technique mentioned in \cite[Appendix~B]{Niederreiter1992}
		and a property of Fibonacci numbers, we obtain
		\begin{equation}\label{n1Bound}
		|n_1|=|\ell F_k-n_2F_{k-1}| \ge |F_{i-1}F_k - F_iF_{k-1}|=F_{k-i}.
		\end{equation}
		Since  $|n_1|\le |\boldsymbol{n}|<R=F_{\lceil2k/3 \rceil}$, there is no solution if $k-i\ge \big\lceil\frac{2k}{3}\big\rceil$, i.e. $i\le \lfloor \frac{k}{3}\rfloor$.
		Therefore
		\begin{eqnarray} \label{sumQR1}
		&&\sum\limits_{\substack{0<|\boldsymbol{n}|<R\\ \boldsymbol{n}\in \mathbb{Z}^2}} \frac{1}{|\boldsymbol{n}|^{3/2}}\nonumber\\
		&=&\sum\limits_{\substack { -R<n_2<R \\n_2\neq 0}}\sum\limits_{\substack{\ell\in \mathbb{Z}\\ -R<\ell F_k-n_2F_{k-1}<R}}
		\frac{1}{\max\{|n_2|,|\ell F_k-n_2F_{k-1}|\}^{3/2}} \nonumber \\
		&=&2 \sum\limits_{i=\lfloor k/3\rfloor+1}^{\lceil 2k/3\rceil-1}\sum\limits_{n_2=F_i}^{F_{i+1}-1}
		\sum\limits_{m=k-i}^{ \lfloor 2k/3\rfloor -1 } 
	\sum\limits_{\substack {\ell \in\mathbb{Z}\\ F_m\le|\ell F_k-n_2F_{k-1}|<F_{m+1}} }
		\frac{1}{(\max\{|n_2|,|\ell F_k-n_2F_{k-1}|\})^{3/2}}, \nonumber\\
		&&
		\end{eqnarray}
		where we used that for $(n_1,n_2)\in \mathbb{Z}^2$ which satisfy $0<\max \{|n_1|,|n_2|\}<R$ and $F_k|(n_1+n_2F_{k-1})$,
		also $(-n_1,-n_2)\in\mathbb{Z}^2$ satisfy these properties.
		
		To further estimate the right-hand side of Equation~\eqref{sumQR1}, we use the following inequalities. For $F_i\le |n_2|<F_{i+1}$ and $F_m\le |n_1|<F_{m+1}$ we have
		\begin{equation*}
		\max\{|n_2|,|\ell F_k-n_2F_{k-1}|\} \ge \max\{F_i, F_{m}\}.
		\end{equation*}
		For $\lfloor \frac{k}{3}\rfloor< i< \lceil \frac{k}{2}\rceil $ and $k-i\le m<k$,
		we have $ \max\{F_i, F_{m}\}=F_m$.

		Applying the bound given in \eqref{NumBound} in each case, we obtain from Equation~\eqref{sumQR1} that
		\begin{eqnarray*}
			\sum\limits_{\substack{0<|\boldsymbol{n}|<R\\ \boldsymbol{n}\in \mathbb{Z}^2}} \frac{1}{|\boldsymbol{n}|^{3/2}} 
			&\le& 2\sum\limits_{i=\lfloor k/3\rfloor+1}^{\lceil k/2\rceil-1}\sum\limits_{m=k-i}^{k} \Big(\frac{4c_0\log F_k}{F_m^{3/2}}+\frac{F_{i-1}F_{m-1}}{F_m^{3/2}F_k}\Big) \nonumber \\
			&&+2\sum\limits_{i=\lceil k/2\rceil}^{\lceil 2k/3\rceil -1}\sum\limits_{m=k-i}^{ \lfloor 2k/3\rfloor -1} \Big( \frac{4c_0\log F_k}{\max\{F_i, F_m\}^{\frac{3}{2}}}+\frac{F_{i-1}F_{m-1}}{\max\{F_i, F_m\}^{\frac{3}{2}}F_k}\Big).
		\end{eqnarray*}
		
		It is well known that  $F_i=[\varphi^i/\sqrt{5}]$ with $\varphi=(1+\sqrt{5})/2$, where $[\cdot]$ denotes  the nearest integer function given by the integer $[x]=\gamma \in \mathbb{Z}$ which satisfies that $\gamma-\frac{1}{2} < x \le \gamma +\frac{1}{2}$. Thus we have
		\begin{eqnarray}\label{sum1-1}
		\sum\limits_{i=\lfloor k/3\rfloor+1}^{\lceil k/2\rceil-1}\sum\limits_{m=k-i}^{ \lfloor 2k/3\rfloor -1} \frac{\log F_k}{F_m^{3/2}} 
		&\lesssim&\log F_k \sum\limits_{i=\lfloor k/3\rfloor +1}^{\lceil k/2\rceil -1}\sum\limits_{m=k-i}^{ \lfloor 2k/3\rfloor -1} \frac {1} {\varphi^{\frac{3}{2}m}} \nonumber\\
		&\lesssim&\log F_k \sum\limits_{i=\lfloor k/3 \rfloor+1}^{\lceil k/2\rceil-1} \frac{\varphi^{-\frac{3}{2}(k-i)} (1- \varphi^{-\frac{3}{2}(i-k/3+1)})}{1-\varphi^{-3/2}}  \nonumber\\
		&\lesssim&\frac{\log F_k}{\varphi^{\frac{3}{2}k}}\sum\limits_{i=\lfloor k/3\rfloor+1}^{\lceil k/2\rceil-1} \varphi^{\frac{3}{2}i} \nonumber \\
		&\lesssim&\frac{\log F_k}{\varphi^{\frac{3}{2}k}}\varphi^{\frac{3}{4}k}\lesssim \frac{\log F_k}{F_k^{3/4}},
		\end{eqnarray}
		and
		\begin{eqnarray}\label{sum1-2}
	\sum\limits_{i=\lfloor k/3\rfloor+1}^{\lceil k/2\rceil-1}\sum\limits_{m=k-i}^{ \lfloor 2k/3\rfloor -1} \frac{F_{i-1}F_{m-1}}{F_m^{3/2}F_k}  &\lesssim&\sum\limits_{i=\lfloor k/3\rfloor+1}^{\lceil k/2\rceil-1}\sum\limits_{m=k-i}^{ \lfloor 2k/3\rfloor -1} \varphi^{i-k-\frac{1}{2}m} \nonumber \\
		&\lesssim & \sum\limits_{i=\lfloor k/3\rfloor+1}^{\lceil k/2\rceil -1}\varphi^{i-k}
		\frac{\varphi^{-\frac{1}{2}(k-i)}(1-\varphi^{-\frac{1}{2}(i-k/3+1)} )} {1-\varphi^{-1/2}} \nonumber \\
		&\lesssim & \frac{1}{\varphi^{\frac{3}{2}k} } \sum\limits_{i=\lfloor k/3\rfloor +1}^{\lceil k/2\rceil -1}\varphi^{\frac{3}{2}i}
		\lesssim \frac{1}{\varphi^{\frac{3}{4}k}}\lesssim \frac{1}{ F_k^{3/4}}.
		\end{eqnarray}
		With respect to the second summation,
		\begin{eqnarray}\label{sum2-1}
		\sum\limits_{i=\lceil k/2\rceil }^{\lceil 2k/3\rceil -1}\sum\limits_{m=k-i}^{ \lfloor 2k/3\rfloor -1}  \frac{\log F_k}{\max\{F_i,F_m\}^{3/2}}
		&\lesssim& \log F_k \sum\limits_{i=\lceil k/2\rceil }^{\lceil 2k/3\rceil -1}\Big(\sum\limits_{m=k-i}^{i}\varphi^{-\frac{3}{2}i}+\sum\limits_{m=i+1}^{ \lfloor 2k/3\rfloor -1} \varphi^{-\frac{3}{2}m}\Big)\nonumber \\
		&\lesssim& \log F_k \sum\limits_{i=\lceil k/2\rceil }^{\lceil 2k/3\rceil -1}k\varphi^{-\frac{3}{2}i}
		\lesssim \frac{(\log F_k )^2}{\varphi^{\frac{3}{4}k}} \lesssim \frac{(\log F_k )^2} {F_k^{3/4}}.\nonumber \\
		&&
		\end{eqnarray}
		Moreover, we obtain
		\begin{eqnarray}\label{sum2-2}
		\sum\limits_{i=\lceil k/2\rceil}^{\lceil 2k/3\rceil-1}\sum\limits_{m=k-i}^{ \lfloor 2k/3\rfloor -1} \frac{F_{i-1}F_{m-1}}{\max\{F_i, F_m\}^{3/2}F_k} 
		&\lesssim& \sum\limits_{i=\lceil k/2\rceil }^{\lceil 2k/3\rceil-1}\sum\limits_{m=k-i}^{ \lfloor 2k/3\rfloor -1} \varphi^{i+m-k-\frac{3}{2} \max\{i,m\}} \nonumber \\
		&=&\sum\limits_{i=\lceil k/2\rceil}^{\lceil2k/3\rceil-1}\Big(\sum\limits_{m=k-i}^{i} \varphi^{m-k-\frac{1}{2}i}+\sum\limits_{m=i+1}^{  \lfloor 2k/3\rfloor -1} \varphi^{i-k-\frac{1}{2}m}\Big) \nonumber \\
		&\lesssim& \sum\limits_{i=\lceil k/2\rceil}^{\lceil 2k/3\rceil-1}\varphi^{\frac{1}{2}i-k} + \sum\limits_{i=\lceil k/2\rceil}^{\lceil 2k/3\rceil-1}\varphi^{\frac{1}{2}i-k}\nonumber \\
		&\lesssim& \varphi^{-\frac{2}{3}k} + \varphi^{-\frac{2}{3}k}\lesssim \frac{1}{F_k^{2/3}}.
		\end{eqnarray}
		
		Since $\frac{(\log F_k)^2}{F_k^{3/4}}$ converges faster to $0$ than $F_k^{-2/3}$, we obtain a convergence rate of order $F_k^{-2/3}$ of the right-hand side of \eqref{SumQR}. By setting $R=F_{\lceil 2k/3\rceil}$ we obtain a convergence rate of order $F_k^{-2/3}$ for $\mathcal{Q}_R (\mathcal{F}_k)$,
		which completes the proof.
	\end{proof}
	\begin{remark} Note that choosing $R$ differently does not improve our result. Since Equation~\eqref{CriterionR} contains the factor $\frac{1}{R}$, we need to choose $F_k^{2/3}\lesssim R$. Choosing $R$ larger than that can only increase the second term in \eqref{CriterionR}. But for this second term we proved a convergence of order $F_k^{-2/3}$ for $R$ of order $F_k^{2/3}$. Hence we cannot improve our result using a larger value of $R$.
	\end{remark}
	
	\begin{remark}
		Lattice point sets of the form $\{(\frac{j}{N},\{\frac{g}{N}\}),j=1,2,\ldots,N\}$ have small star-discrepancy with respect to rectangular boxes if the coefficients in the  continued fraction  expansion of $\frac{g}{N}$ are bounded independently of $N$, see \cite[Theorem~5.17]{Niederreiter1992}. In particular, for Fibonacci lattice point sets these coefficients are always $1$.  Niederreiter \cite{Niederreiter1986} explicitly finds values of $g$ for $N$ of the form $2^\ell$, $3^\ell$, $5^\ell$, such that the continued fraction coefficient are at most $3$ for $2^\ell$ and $3^\ell$, and at most $4$ for $5^\ell$. It is reasonable to suggest that similar results to Theorem~\ref{Th:lattice} and Corollary~\ref{Corr:integErr}  can also be obtained for lattice point sets based on the results in \cite{Niederreiter1986}.
	\end{remark}

	\section*{Acknowledgements}
The work was supported
		under the Australian Research Council's Discovery Projects funding scheme (project DP150101770).
	The authors are grateful to the anonymous referees and handling editor for valuable comments which improved the presentation of the results.

\end{document}